\newcommand{\bb}{\mathbb}
\newcommand{\ov}{\overline}
\theoremstyle{plain}
\newtheorem{theorem}{Theorem}
\newtheorem{lemma}[theorem]{Lemma}
\newtheorem{corollary}[theorem]{Corollary}
\newtheorem{conj}[theorem]{Conjecture}
\newtheorem{definition}[theorem]{Definition}
\newtheorem*{theorem*}{Theorem}
\theoremstyle{remark} 
\theoremstyle{remark} \newtheorem{example}{Example}
\theoremstyle{remark}
\numberwithin{theorem}{section}
\begin{document}

\title{Induced forests in some distance-regular graphs}

\author[K. Gunderson]{Karen Gunderson}
	\address{Department of Mathematics, University of Manitoba,	Winnipeg, 
	Manitoba R3T 2N2, Canada}\email{Karen.Gunderson@umanitoba.ca}
	\thanks{The first author was supported by Natural Science and Engineering Research Council of Canada (grant RGPIN-2016-05949).}
\author[K. Meagher]{Karen Meagher}
	\address{Department of Mathematics and Statistics, University of Regina,	Regina, 
	Saskatchewan S4S 0A2, Canada}\email{karen.meagher@uregina.ca}
	\thanks{The second author was supported by Natural Science and Engineering Research Council of Canada (grant RGPIN-03952-2018).}
\author[J. Morris]{Joy Morris}
	\address{Department of Mathematics and Computer Science, University of Lethbridge,	Lethbridge, 
	Alberta T1K 3M4, Canada}\email{joy.morris@uleth.ca}
		\thanks{The third and fourth authors were supported by the Natural Science and Engineering Research Council of Canada (grant RGPIN-2017-04905).}
\author[Venkata Raghu Tej Pantangi]{Venkata Raghu Tej Pantangi}
	\address{Department of Mathematics and Computer Science, University of Lethbridge,	Lethbridge, 
	Alberta T1K 3M4, Canada}\email{raghu.pantangi@uleth.ca}
	\thanks{The authors are all indebted to the support of the Pacific Institute for Mathematical Sciences (PIMS), through the establishment of the Collaborative Research Group on Movement and Symmetry in Graphs which funded this work.}

\date{\today}

\begin{abstract}
In this article, we study the order and structure of the largest induced forests in some families of graphs.   First we prove a variation of the ratio bound that gives an upper bound on the order of the largest induced forest in a graph. Next we define a \textsl{canonical induced forest} to be a forest that is formed by adding a vertex to a coclique and give several examples of graphs where the maximal forest is a canonical induced forest. These examples are all distance-regular graphs with the property that the Delsarte-Hoffman ratio bound for cocliques holds with equality.  We conclude with some examples of related graphs where there are induced forests that are larger than a canonical forest.
\end{abstract}

\subjclass[2010]{Primary: 05C69, Secondary: 05C35, 05C25}
\keywords{induced forests, distance-regular graphs, acyclic number, $1$-degenerate subgraphs}

\maketitle

\section{Introduction}

In this paper we study both the cardinality and structure of the largest sets of vertices inducing forests in some distance-regular graphs.  For a graph $G$, let $\tau(G)$ be the maximum number of vertices inducing a forest in $G$. The quantity $\tau(G)$ is called the \emph{acyclic number of $G$}. Letting $\alpha(G)$ denote the independence number of $G$, the order of the largest coclique, it is clear that for any non-empty graph, $\tau(G) \geq \alpha(G) +1$ as adding any vertex to an independent set will induce a forest.  The main results of this article are to give bounds on $\tau(G)$ for certain distance-regular graphs and to identify graphs in which every maximum induced forest can be obtained by adding a single vertex to an independent set.

A number of other graph parameters and special kinds of vertex subsets bear some relationship to this acyclic number $\tau(G)$. An induced forest in a graph is complementary to a set of vertices whose removal induces an acyclic graph and this is sometimes known as a `decycling set' of a graph, or a `feedback vertex set'.  Recall that a graph is $k$-degenerate if and only if every subgraph has a vertex of valency at most $k$.  The notion of degeneracy arises in colouring problems and in the study of `cores' of graphs, related to their connectivity properties.  A graph is empty if and only if it is 0-degenerate, while a graph is a forest if and only if it is $1$-degenerate.  Thus, the largest coclique in a graph is the largest set of vertices that induce a $0$-degenerate subgraph, while the largest induced forest can be thought of as the largest set of vertices inducing a $1$-degenerate subgraph.

Alon, Kahn, Seymour~\cite{AKS87} showed that $\tau(G) \geq \sum_{v\in V}2/(d(v)+1)$, where $d(v)$ denotes the valency of $v$. In fact, this is a special case of the general bound they prove for $k$-degenerate induced subgraphs. In the case of a $d$-regular graph on $n$ vertices, this implies that $\tau(G) \ge 2n/(d+1)$. This bound is tight when $(d+1)\mid n$ for a graph consisting of disjoint copies of $K_{d+1}$.   Bondy, Hopkins and Staton~\cite{BHS87} showed that if $d=3$ and $G$ is connected (so that the previous tight examples do not apply), then $\tau(G) \geq \frac{5n-2}{8}$ (here $n$ is the number of vertices). They also provided examples where their bound is tight.  Further refinements have been given for regular graphs of large girth~\cite{HW08, KL18, LZ96}.   Bau, Wormald, and Zhou~\cite{BWZ02} showed that for random $3$-regular graphs, asymptotically almost surely, $\tau(G) = n-\lceil (n+2)/4 \rceil=\lfloor (3n-2)/4\rfloor$  and gave bounds for random $r$-regular graphs in general. Alon, Mubayi and Thomas~\cite{AMT01} gave bounds on $\tau(G)$ in terms of the independence number and the maximum valency.

The largest induced forests and smallest decycling sets in specific families of graphs have been well-studied in the literature, for example: planar graphs~\cite{AW87}, bipartite graphs~\cite{nA03, CFS23}, hypercubes~~\cite{BBDLV01, FLP00, dP03} and  binomial random graphs~\cite{KZ21}. Related work has concerned the largest induced trees~\cite{ESS86, FLS09, MS08, PR86, fP10} and the largest induced matchings~\cite{kC89, CDKS21}.

One of the most well known results in extremal graph theory is the Erd\H{o}s-Ko-Rado theorem (\cite{EKR61}). 

\begin{theorem}[Erd\H{o}s, Ko, Rado]
Let $n > 2k$ and let $\mathcal{F} = \{F_1, F_2, \ldots, F_m\}$ be an intersecting family of $k$-sets from $[n]$.  Then,
\[
|\mathcal{F}| \leq \binom{n-1}{k-1},
\]
with equality if and only if $\mathcal{F}$ consists of all $k$-sets containing a fixed element $x \in [n]$.
\end{theorem}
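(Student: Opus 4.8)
The plan is to prove the inequality by Katona's cyclic permutation (circle) argument, and then to treat the characterisation of equality separately, since that is where the real work lies. The combinatorial core is the following \emph{circle lemma}: if $n$ points are placed around a circle and $\mathcal{A}$ is a family of arcs, each consisting of $k$ cyclically consecutive points, that pairwise intersect, then $|\mathcal{A}| \leq k$ whenever $n \geq 2k$. Granting this, the bound on $|\mathcal{F}|$ follows by a double count: consider all $n!$ ways to arrange the elements of $[n]$ around a labelled $n$-cycle, and count pairs $(\phi, F)$ with $F \in \mathcal{F}$ occupying a block of $k$ consecutive positions under the arrangement $\phi$. A fixed $k$-set $F$ occupies a fixed block of $k$ consecutive positions in exactly $k!\,(n-k)!$ arrangements, and there are $n$ such blocks, so the number of such pairs equals $|\mathcal{F}|\, n\, k!\,(n-k)!$. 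On the other hand, for each fixed $\phi$ the members of $\mathcal{F}$ appearing as arcs are pairwise intersecting, so by the circle lemma there are at most $k$ of them, whence the count is at most $k\, n!$. Comparing gives $|\mathcal{F}| \leq \frac{k\, n!}{n\, k!\,(n-k)!} = \frac{k}{n}\binom{n}{k} = \binom{n-1}{k-1}$.

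To prove the circle lemma, fix an arc $A = \{p, p+1, \dots, p+k-1\}$ (indices mod $n$) in $\mathcal{A}$, assuming $\mathcal{A}$ is nonempty. Any other arc $B \in \mathcal{A}$ meets $A$ but, having the same size, is not contained in $A$, so it extends past exactly one end of $A$; writing such arcs as $L_m = \{p-m, \dots, p-m+k-1\}$ or $R_m = \{p+m, \dots, p+m+k-1\}$, one checks that meeting $A$ while differing from $A$ forces $1 \leq m \leq k-1$ in each case. Now for each such $m$ the arcs $L_m$ and $R_{k-m}$ have union of size $2k \leq n$ and hence are disjoint, so, $\mathcal{A}$ being intersecting, at most one of them lies in $\mathcal{A}$. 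These $k-1$ disjoint pairs exhaust all arcs other than $A$, so $|\mathcal{A}| \leq 1 + (k-1) = k$.

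The main obstacle is the equality statement, where the hypothesis $n > 2k$ (rather than merely $n \geq 2k$) becomes essential. Equality in the double count forces, for \emph{every} arrangement $\phi$, exactly $k$ members of $\mathcal{F}$ to appear as arcs, and forces equality in the circle lemma for the resulting family. So I would first strengthen the lemma to: for $n > 2k$, any family of $k$ pairwise-intersecting $k$-arcs has a common point (the extra slack rules out the ``antipodal'' configurations available when $n = 2k$, such as the three $3$-arcs $\{1,2,3\},\{3,4,5\},\{5,6,1\}$ on $6$ points); this can be extracted from a slightly more careful analysis of the pairing above. Given this, each arrangement $\phi$ singles out an element $x_\phi \in [n]$ lying in all $k$ of the corresponding members of $\mathcal{F}$, and it remains to see that $x_\phi$ is independent of $\phi$: any two arrangements are joined by a sequence of adjacent transpositions, and comparing the arc-families before and after such a swap shows the common element cannot change, so there is a single $x$ in every member of $\mathcal{F}$; counting then forces $\mathcal{F}$ to be exactly the star at $x$. (Alternatively, once the bound is established the equality case can be deduced from a compression/shifting argument or from the Hilton--Milner theorem, but the route above stays within the cyclic framework.)

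Finally, it is worth recording an alternative that fits the theme of this paper: the $k$-subsets of $[n]$ with disjointness as adjacency form the Kneser graph, a distance-regular graph whose cocliques are exactly the intersecting families, and applying the Delsarte--Hoffman ratio bound with its known spectrum yields $\binom{n-1}{k-1}$ precisely, with the equality case recovered from the structure of the relevant eigenspace. I would present Katona's argument as the main proof because it is self-contained, but remark on this second approach as motivation for the ratio-bound results developed later in the article.
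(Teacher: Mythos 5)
The paper only \emph{states} this theorem with a citation to \cite{EKR61}; it offers no proof, so there is no in-paper argument to compare against, and your write-up must stand on its own.

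Your proof of the inequality is correct and complete: the circle lemma, the pairing of $L_m$ with $R_{k-m}$, and the double count over pairs $(\phi,F)$ are all standard and carefully done, and the arithmetic $\tfrac{k\,n!}{n\,k!\,(n-k)!}=\binom{n-1}{k-1}$ is right. The concluding remark that the bound can also be extracted from the Delsarte--Hoffman ratio bound on the Kneser graph is apt and is in fact precisely how the present paper invokes the Erd\H{o}s--Ko--Rado theorem.

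The equality case, however, is only a sketch, and the sketch understates the work required in a way that amounts to a genuine gap. You assert that the strengthened circle lemma --- for $n>2k$, any $k$ pairwise-intersecting $k$-arcs share a common point --- ``can be extracted from a slightly more careful analysis of the pairing above.'' This is misleading. The pairing shows at most one of $L_m,R_{k-m}$ is chosen, but it does \emph{not} show that the chosen arcs $L_a$ and $R_b$ always satisfy $a+b\le k-1$ (which is what would give a common point directly). Indeed $L_a$ and $R_b$ can intersect ``around the back'' of the circle whenever $a+b\ge n-k+1$, and since $a,b\le k-1$ this is possible for all $n$ with $2k<n\le 3k-3$; for instance with $n=9$, $k=4$ the arcs $L_3=\{7,8,9,1\}$ and $R_3=\{4,5,6,7\}$ meet even though $3+3>k-1$. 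Ruling out such configurations requires using intersections among \emph{all} members of the family, not merely with the anchor arc $A$, and it does take a real argument (one clean route is to pass to the starting points $S$ of the arcs, observe that pairwise intersection is exactly ``pairwise cyclic distance at most $k-1$,'' and show that for $n>2k$ any such $k$-subset of $\mathbb{Z}_n$ must be an arc of $k$ consecutive points; the proof is short for $n>3k-2$ but needs a further step in the narrow range $2k<n\le 3k-3$). Likewise, the claim that an adjacent transposition cannot change the common element $x_\phi$ is plausible but is not self-evident --- a transposition alters exactly two of the $n$ arcs, and one must check that this never shifts the pivot of the fan --- and should be written out. None of this says the approach is wrong; the strengthened lemma and the transposition step are both true, but as submitted the equality case is not proved.
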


This celebrated result can be interpreted as a characterization of the cardinality and structure of independent sets of maximum possible cardinality in the Kneser graph.  The Kneser graph $K(n, k)$ is defined for any $n, k \in \mathbb{Z}^+$ to be the graph whose vertices are all of the $k$-sets from $[n] = \{1, 2, \ldots, n\}$ with two vertices $A, B$ adjacent if and only if $A \cap B = \emptyset$. If $n <2k$, then $K(n,k)$ has no edges, so we assume that $n \geq 2k$. Translated into the setting of the Kneser graphs, the Erd\H{o}s-Ko-Rado Theorem states that for $n > 2k$, $\alpha(K(n, k)) = \binom{n-1}{k-1}$ and any coclique of this order consists of all $k$-sets that contain a common element.   

Similar such characterizations were made for maximum cocliques in many other families of graphs. We refer to \cite{GMbook} for a through survey of such results. 
The main results of this article characterize the largest induced forests in some distance-regular graphs. The graphs we consider are distance-regular graphs for which the characterization of maximum independent sets is known. Throughout this article, we will refer to induced forests of maximum possible order as \textsl{maximum induced forests}.
Let $G=(V,\ E)$ be a graph and $S$ be a coclique of $V$. As noted previously, for any $v \in V \setminus S$, the set $S \cup \{v\}$ induces a forest, so that $\tau(G) \geq \alpha(G)+1$. 
A natural next step is to find graphs in which every maximum induced forest can be constructed by adding a vertex to an independent set. 

\begin{definition}\label{Nat}
Let $G=(V,\ E)$ be a graph and let ${F}\subset V$ induce a forest. We say that ${F}$ is a \textsl{canonical} induced forest if there is a vertex $v \in {F}$ such that ${F} \setminus \{v\}$ is an independent set. Often we refer to this as just a canonical forest in $G$.
\end{definition}   

The following result, simply known as the Delsarte-Hoffman ratio bound, is a spectral graph theoretic method that has been used to characterize the maximum cocliques in many families of graphs. 

\begin{theorem}\label{ratiobound}(see \cite[Theorem 3.2]{Hae1995})
Let $G$ be a $k$-regular graph on $n$ vertices and let $\lambda$ be the smallest eigenvalue of the adjacency matrix of $G$. Then we have 
\[
\alpha(G) \leq \dfrac{n(-\lambda)}{k-\lambda}.
\]
\end{theorem}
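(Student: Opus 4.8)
The plan is to test the quadratic form associated with the adjacency matrix $A$ against the characteristic vector of a maximum coclique, exploiting the fact that $k$-regularity makes the all-ones vector $\mathbf{1}$ an eigenvector of $A$ for the eigenvalue $k$. Fix a coclique $S \subseteq V(G)$ with $|S| = s := \alpha(G)$, and let $x \in \{0,1\}^n$ be its characteristic vector, so that $\mathbf{1}^{\T} x = \|x\|^2 = s$. Since $S$ induces no edges, $x^{\T} A x = 0$.

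Next I would decompose $x$ orthogonally relative to the span of $\mathbf{1}$, writing $x = \frac{s}{n}\mathbf{1} + z$ with $z \perp \mathbf{1}$; a direct computation of norms gives $\|z\|^2 = s - s^2/n = s(n-s)/n$. Using $A\mathbf{1} = k\mathbf{1}$ and the symmetry of $A$, the cross terms $\mathbf{1}^{\T} A z$ and $z^{\T} A \mathbf{1}$ both vanish, so
\[
0 \;=\; x^{\T} A x \;=\; \frac{s^2}{n^2}\,\mathbf{1}^{\T} A \mathbf{1} + z^{\T} A z \;=\; \frac{k s^2}{n} + z^{\T} A z .
\]
Since $\lambda$ is the least eigenvalue of the symmetric matrix $A$ and $z$ lies in the orthogonal complement of an eigenvector, the Rayleigh quotient estimate gives $z^{\T} A z \geq \lambda \|z\|^2 = \lambda s(n-s)/n$. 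Substituting and multiplying through by $n/s > 0$ yields $0 \geq k s + \lambda(n - s) = s(k - \lambda) + \lambda n$, whence $s(k-\lambda) \leq -\lambda n$. As $G$ contains an edge we have $\lambda < 0 \leq k$, so $k - \lambda > 0$ and we may divide to obtain $s \leq \dfrac{n(-\lambda)}{k-\lambda}$, as claimed.

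Honestly there is no deep obstacle here; the argument is short and self-contained once one recognizes that $\mathbf{1}$ is the relevant eigenvector. The only points requiring care are the bookkeeping in the orthogonal decomposition (checking $\|z\|^2$ and the vanishing of the cross terms) and the sign conditions needed to divide safely — in particular that $k - \lambda > 0$, which fails only for an edgeless graph, a case that may be excluded or treated trivially. An equivalent route, which I would mention as an alternative, is to apply eigenvalue interlacing to the $2\times 2$ quotient matrix of the partition $\{S, V\setminus S\}$: its eigenvalues are $k$ and $-ks/(n-s)$, and interlacing forces $-ks/(n-s) \geq \lambda$, which rearranges to the same bound.
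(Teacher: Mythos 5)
Your proof is correct. The bookkeeping checks out: $x^{\T}Ax = 0$ for a coclique characteristic vector; the orthogonal decomposition $x = \frac{s}{n}\mathbf{1} + z$ has $\|z\|^2 = s(n-s)/n$; the cross terms vanish because $A\mathbf{1} = k\mathbf{1}$ and $z \perp \mathbf{1}$; and the Rayleigh estimate $z^{\T}Az \geq \lambda\|z\|^2$ (which in fact holds for any $z$, not only those orthogonal to $\mathbf{1}$, so that remark is harmless but unnecessary) gives the inequality. The sign discussion for dividing by $k-\lambda$ is also correctly handled.

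The paper does not reproduce a proof of the ratio bound; it cites Haemers and notes that the result follows from Cauchy interlacing, which is exactly the alternative argument you sketch at the end via the $2\times2$ quotient matrix of the partition $\{S, V\setminus S\}$. So your primary route (the Rayleigh-quotient computation with the explicit orthogonal split along $\mathbf{1}$) is genuinely different from the route the paper indicates. The Rayleigh-quotient argument is self-contained and elementary, needing only the spectral theorem for symmetric matrices; the interlacing argument requires the interlacing machinery but has the advantage of slotting directly into the same framework the paper uses to prove its Theorem~\ref{algbound} (Haemers' Theorem~3.5), where one interlaces against the quotient matrix of a general induced subgraph rather than a coclique. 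In that sense the paper's choice is the more economical one given what follows, but your version is a perfectly good, arguably more transparent, standalone proof, and you were right to flag the interlacing route as the one better aligned with the rest of the section.
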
   

This result is an application of the Cauchy Interlacing Theorem (see \cite[Theorem 2.1]{Hae1995}). Applying the same technique, we will show the following spectral upper bound for the order of an induced forest in a regular graph.

\begin{theorem}\label{algbound}
Let $G$ be a $k$-regular graph on $n$ vertices and let $\lambda$ be the smallest eigenvalue of the adjacency matrix of $G$. Then
\[
\tau(G) \leq \frac{n(2-\lambda) + \sqrt{n^{2}(2-\lambda)^2-8n(k-\lambda)}}{2(k-\lambda)} < \frac{-n\lambda}{k-\lambda} +\frac{2n}{k-\lambda}.
\]
\end{theorem}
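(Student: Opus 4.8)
The plan is to adapt the proof of the Delsarte--Hoffman ratio bound (Theorem~\ref{ratiobound}), replacing the defining ``no internal edge'' property of a coclique by the inequality ``at most $|F|-1$ internal edges'', which holds for any induced forest $F$. Fix a maximum induced forest $F\subseteq V$, put $m=\tau(G)=|F|$, and let $e$ denote the number of edges of $G$ with both endpoints in $F$, so that $e\le m-1$. We may assume $k\ge 1$ (so that $G$ has an edge and the bound is well defined) and $m\le n-1$: in the remaining case $m=n$ the graph $G$ is itself a forest, which forces $k=1$, and the bound is then checked directly. Let $A$ be the adjacency matrix of $G$ and let $B$ be the $2\times 2$ quotient matrix of $A$ for the partition $\{F,\ V\setminus F\}$, whose $(i,j)$ entry is the average over vertices $v$ in part $i$ of the number of neighbours of $v$ in part $j$. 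Counting the $e$ edges inside $F$ and the $km-2e$ edges between $F$ and $V\setminus F$ shows that $B$ has first row $\bigl(\tfrac{2e}{m},\,k-\tfrac{2e}{m}\bigr)$ and second row $\bigl(\tfrac{km-2e}{n-m},\,k-\tfrac{km-2e}{n-m}\bigr)$; each row sums to $k$, so $k$ is an eigenvalue of $B$ and the other eigenvalue is $\theta=\mrm{tr}(B)-k=\dfrac{2e}{m}-\dfrac{km-2e}{n-m}$.

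Two facts now combine. First, since $G$ is $k$-regular, $k$ is the largest eigenvalue of $A$; applying Cauchy interlacing to the quotient matrix $B$ (see~\cite[Theorem~2.1]{Hae1995}, as in the proof of Theorem~\ref{ratiobound}) shows that the smaller eigenvalue $\theta$ of $B$ satisfies $\theta\ge\lambda$. Second, $\theta$ is a strictly increasing function of $e$, since the coefficient of $e$ in the formula for $\theta$ is $\tfrac{2}{m}+\tfrac{2}{n-m}>0$; together with $e\le m-1$ this gives
\[
\lambda \;\le\; \theta \;\le\; \frac{2(m-1)}{m}-\frac{km-2(m-1)}{n-m}.
\]

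It remains to solve this inequality for $m$. Clearing the positive denominators $m(n-m)$ and collecting terms (a routine simplification) turns it into
\[
(k-\lambda)\,m^{2}\;-\;n(2-\lambda)\,m\;+\;2n\;\le\;0 .
\]
As $G$ has an edge, $k-\lambda>0$, so this is an upward-opening parabola in $m$, and hence $m$ is at most its larger root,
\[
m\;\le\;\frac{n(2-\lambda)+\sqrt{n^{2}(2-\lambda)^{2}-8n(k-\lambda)}}{2(k-\lambda)} ,
\]
which is the claimed bound. For the strict inequality, note that $8n(k-\lambda)>0$ and $2-\lambda>0$, so the square root is strictly smaller than $n(2-\lambda)$; replacing it by $n(2-\lambda)$ yields $\tau(G)<\dfrac{n(2-\lambda)}{k-\lambda}=\dfrac{-n\lambda}{k-\lambda}+\dfrac{2n}{k-\lambda}$.

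The only real idea is to marry the quotient-matrix/interlacing step of the ratio bound with the elementary edge bound $e\le m-1$; beyond that, the points needing attention are computing $\theta$ correctly, checking that $\theta$ increases with $e$ so that $e\le m-1$ is used in the correct direction, and confirming $k-\lambda>0$ so that both the parabola argument and the strictness step are legitimate. I do not expect any deeper obstacle, since the argument is a direct analogue of the classical ratio-bound proof.
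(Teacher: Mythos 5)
Your proof is correct and follows essentially the same route as the paper: apply interlacing to the $2\times2$ quotient matrix for the partition $\{F, V\setminus F\}$ to bound the second eigenvalue of the quotient below by $\lambda$, then use that a forest on $m$ vertices has at most $m-1$ edges to reach the quadratic $(k-\lambda)m^2 - n(2-\lambda)m + 2n \le 0$. The only cosmetic difference is that the paper plugs in the exact edge count $f-c$ and then invokes $c\ge1$ at the quadratic stage (via an intermediate cited inequality of Haemers), whereas you make the monotonicity of $\theta$ in $e$ explicit and substitute $e=m-1$ before rearranging; these are equivalent.
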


An edge-counting argument provides an alternative bound on the order of an induced forest in a regular graph that is sometimes better than the spectral bound (see discussion after Lemma~\ref{lem:2k+1forest}).

\begin{theorem}\label{elembound}
Let $G$ be a $k$-regular graph on $n$ vertices. Let $f$ be the number of vertices and $c$ the number of connected components in an induced forest of $G$. Then
$$f \le \frac{nk-2c}{2k-2} \le \frac{nk-2}{2k-2}.$$
\end{theorem}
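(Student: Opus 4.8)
The plan is to count edges in the induced forest in two ways. Let $S$ be a set of $f$ vertices inducing a forest $H$ with $c$ connected components. Since $H$ is a forest, it has exactly $f - c$ edges. On the other hand, I want an upper bound on the number of edges incident to $S$ coming from the $k$-regularity of $G$. Each vertex of $S$ has valency $k$ in $G$, so the number of edge-endpoints lying in $S$ is exactly $kf$; these edge-endpoints are distributed among edges with both ends in $S$ (there are $f-c$ of these, contributing $2(f-c)$ endpoints) and edges with exactly one end in $S$ (contributing the rest). Hence $kf = 2(f-c) + (\text{number of edges leaving } S) \geq 2(f-c)$, but this alone only gives $f \le \tfrac{2c}{2-k}$ which is the wrong direction for $k \ge 2$, so I need to be more careful about what is actually being bounded.

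The correct edge-count goes the other way: I should bound the number of edges of $H$ from below in a way that forces $f$ to be small. Observe that the total number of edges in $G$ incident with a vertex of $S$ is at least $kf - \binom{f}{2}$-type bounds are not what is wanted either; instead, the cleanest route is: the number of edges of $G$ with at least one endpoint in $S$ is at least $\tfrac{1}{2}(kf - e(H) \cdot 0)$... Let me restate the intended argument properly. Count pairs $(e,v)$ where $v \in S$ and $e$ is an edge of $G$ incident to $v$: there are exactly $kf$ such pairs. An edge with both endpoints in $S$ is counted twice and there are $f-c$ of these; an edge with exactly one endpoint in $S$ is counted once. Therefore the number of edges of $G$ touching $S$ equals $kf - (f-c)$. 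Since $G$ has $\tfrac{nk}{2}$ edges in total, we get $kf - (f-c) \le \tfrac{nk}{2}$, i.e. $f(k-1) \le \tfrac{nk}{2} - c$, which rearranges to $f \le \tfrac{nk - 2c}{2k-2}$, as claimed.

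The second inequality $\tfrac{nk-2c}{2k-2} \le \tfrac{nk-2}{2k-2}$ is immediate once we know $c \ge 1$: a nonempty induced forest has at least one component, and if $S = \emptyset$ the bound $f = 0 \le \tfrac{nk-2}{2k-2}$ holds trivially (here one should note $k \ge 2$ is needed for $2k-2 > 0$; for $k \le 1$ the graph is itself a forest and the statement should be read accordingly, or one simply assumes $k \ge 2$). So the only genuine content is the first inequality, and the main thing to get right is the bookkeeping that the number of $G$-edges meeting $S$ is exactly $kf - (f-c)$ — in particular that every edge of $H$ is an edge of $G$ contributing to the double-count, which is where "induced forest" (as opposed to "forest subgraph") is used. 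There is no serious obstacle; the proof is a short double-counting argument, and the only place to be cautious is ensuring the inequality direction is preserved when dividing by $2k-2$, which requires $k \ge 2$.
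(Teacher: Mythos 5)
Your argument is correct and is essentially the paper's: double-count edge–vertex incidences to show the number of edges of $G$ meeting the forest is $kf-(f-c)=f(k-1)+c$, bound this by the total $nk/2$, and rearrange. The opening false start and the remark about needing $k\ge 2$ for the sign when dividing are not in the paper but do not change the substance.
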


 The first summand of the right-hand side of the inequality in Theorem~\ref{algbound} is equal to the Delsarte-Hoffman ratio bound on the independence number $\alpha(G)$. It is natural to investigate the orders of forests in regular graphs for which the Delsarte-Hoffman ratio bound is tight. Below is a list of five families of such graphs in which the maximum forest is formed by adding a single vertex to a coclique.

\begin{theorem}
\label{thm:examples}
In the following graphs, every maximum forest is a canonical forest:
\begin{enumerate}
\item the Kneser graph $K(n, k)$, for every $k \geq 2$ and $n \geq 2k^3$;
\item the $q$-Kneser graph $K_q(n, k)$, for $k\geq 2$, $n>3k-2$ and $q$ sufficiently large;
\item the non-collinearity graph on points in a generalized quadrangle with parameters $(s, t)$ and $s>3$;
 \item $X_{m,n} = \otimes^{m}K_{n}$ with $m\ge 2$ and $n>2m(m-1)$;
 \item the complement of the block graph of an orthogonal array with parameters $m, n$ with $n>1+2m(m-1)$;
\end{enumerate}
\end{theorem}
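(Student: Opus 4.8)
The plan is to treat all five families by the same two-step template: first bound $\tau(G)$ from above, then analyze the extremal configurations. For the upper bound I would apply Theorem \ref{algbound} together with the hypothesis that the Delsarte–Hoffman ratio bound is tight, i.e. $\alpha(G) = \frac{-n\lambda}{k-\lambda}$. Since the first summand in Theorem \ref{algbound} is exactly this ratio bound, we obtain $\tau(G) < \alpha(G) + \frac{2n}{k-\lambda}$; combined with the trivial lower bound $\tau(G)\ge \alpha(G)+1$, the goal becomes to show that when $\frac{2n}{k-\lambda}$ is not too large relative to $\alpha(G)$ (which is where the numerical hypotheses like $n\ge 2k^3$, $n>2m(m-1)$, etc. enter), the only way to realize a forest of order $\alpha(G)+1$ or larger that is consistent with the structure of these specific graphs is to take a maximum coclique and add one vertex. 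In several of these cases the integrality of $\tau(G)$ forces $\tau(G)=\alpha(G)+1$ outright, reducing the problem to a structural classification.

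The second and harder step is the structural analysis: given a forest $F$ with $|F| = \tau(G)$, show $F\setminus\{v\}$ is a coclique for some $v\in F$. Here I would use the known characterization of maximum cocliques in each family (EKR-type theorems: for $K(n,k)$ the canonical stars, for $K_q(n,k)$ the point-pencils/dictatorships, for generalized quadrangles the ovoids or their complements, for $\otimes^m K_n$ and orthogonal-array block graphs the "canonical" cocliques coming from fixing a coordinate). The argument would proceed by contradiction: if $F$ contains two independent edges, or a vertex of degree $\ge 2$ in $F$, one shows that removing a small number of vertices from $F$ leaves a coclique that is too large — larger than $\alpha(G)$ — or one directly finds a cycle. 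The key leverage is that in each of these graphs adjacency is "rigid" enough (disjointness of $k$-sets, opposite subspaces, non-collinearity) that a vertex adjacent to two vertices of a near-maximum coclique, together with the coclique structure, is forced to complete a cycle; this is essentially a stability version of the EKR theorem for each family, and for $K(n,k)$ specifically the bound $n\ge 2k^3$ is the kind of threshold under which such stability statements (e.g. Hilton–Milner–type or removal-type results) become clean.

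I would organize the write-up as five short subsections, each invoking (i) the computed spectrum and ratio-bound tightness, (ii) Theorem \ref{algbound} to pin down $\tau(G)$, and (iii) the relevant coclique-classification theorem plus a local cycle-finding lemma. A useful unifying device is a lemma of the following shape: if $F$ induces a forest in $G$ and $F$ contains a set $S$ of $\alpha(G)$ pairwise non-adjacent vertices together with at least two further vertices, then under the stated parameter bounds one of those further vertices has two neighbours in $F$ lying in a common "canonical" coclique, producing a $4$-cycle or a longer cycle — contradiction. Proving this lemma in each family is where the real work lies, since it requires the precise extremal and near-extremal structure; I expect the generalized quadrangle case and the $q$-Kneser case ("$q$ sufficiently large") to need the most care, as the near-coclique stability results there are less standardized than for the ordinary Kneser graph.
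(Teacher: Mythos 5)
Your proposal takes a genuinely different route from the paper, and it also has a gap that would prevent it from succeeding as written.

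The central idea of the paper's proof, which is absent from your plan, is a simple combinatorial characterization of non-canonical forests: a forest $F$ is non-canonical if and only if it contains an induced $P_4$ or an induced $P_2+P_2$. In either case, $F$ contains two adjacent pairs $(a,b)$ and $(c,d)$ such that every other vertex of $F$ is non-adjacent to both vertices of at least one of the pairs; writing $\eta(G)$ for the maximum, over adjacent pairs $(a,b)$, of the number of common non-neighbours of $a$ and $b$, this gives $|F|\le 2+2\eta(G)$ (Lemma~\ref{lem:count}). The theorem is then proved for each family by computing or bounding $\eta(G)$ and checking $2+2\eta(G)<\alpha(G)+1$; this is where the numerical thresholds $n\ge 2k^3$, $n>3k-2$, $n>2m(m-1)$, etc.\ come from. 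Nothing about this argument uses Theorem~\ref{algbound}, and crucially nothing uses the \emph{structure} of maximum cocliques --- only the value of $\alpha(G)$ is needed.

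Your first step would not succeed as you expect: even assuming the ratio bound is tight, the estimate $\tau(G)<\alpha(G)+2n/(k-\lambda)$ does not pin down $\tau(G)$ by integrality. For $K(n,k)$ with $n\ge 2k^3$, for instance, $2n/(k-\lambda)$ is slightly larger than $2$, so the best you can extract is $\tau\le \alpha+2$; you would still need to rule out forests of order $\alpha+2$ and to classify those of order $\alpha+1$. Your second step is also on the wrong track: EKR-type coclique classification and stability results are much heavier machinery than what is needed, and they do not obviously address the real configuration to exclude, namely a forest with no large coclique inside it at all (a union of short paths scattered across the graph). Your unifying "local cycle-finding lemma" presupposes that an extremal forest contains a set of $\alpha(G)$ pairwise non-adjacent vertices, but this is exactly what has to be proved, not assumed. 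The paper's $\eta$-counting lemma sidesteps all of this and handles all five families uniformly.
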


We were able to make a few refinements in some subfamilies of the graphs mentioned in the above result. These can be found in Theorems~\ref{thm:Kneserk2}, \ref{cor:qKneserk2}, and \ref{TensorPsrg}. 

We prove Theorems~\ref{algbound} and~\ref{elembound} in Section~\ref{UB}. In Section~\ref{Can}, we prove the results of Theorem~\ref{thm:examples}, characterizing induced forests in some other families of graphs. In Section~\ref{large}, we produce an infinite family of graphs with ``large'' maximum forests.

\section{Upper bounds}\label{UB} 

We begin this section by proving Theorem~\ref{elembound}. 

\begin{proof}[Proof of Theorem~\ref{elembound}] 
Let $G=(V,\ E)$ be a $k$-regular graph on $n$ vertices, and let $F$ be an induced forest of $G$ with $f$ vertices and $c$ connected components. 

Now, $F$ has $f-c$ edges. Since each of the $f$ vertices of $F$ has $k$ incident edges and each of the $f-c$ edges of $F$ is counted twice in the valency of vertices of $F$, there are $fk-2(f-c)=f(k-2)+2c$ edges of $G$ that join vertices of $F$ to vertices that are not in $F$. In total, this makes $f(k-1)+c$ edges of $G$ that are incident with at least one vertex of $F$. 

Clearly, the number of edges of $G$ that are incident with at least one vertex of $F$ cannot exceed the total number of edges of $G$, which by the Handshaking Lemma is $nk/2$. So
$$f(k-1)+c \le nk/2.$$
Rearranging this inequality produces the given result, which is maximized when $c=1$.
\end{proof}

We next work toward the proof of Theorem~\ref{algbound}.
Let $G=(V,\ E)$ be a $k$-regular graph on $n$ vertices. Let $k=\lambda_{1}\geq \lambda_{2}\cdots\geq \lambda_{n}$ be the eigenvalues of its adjacency matrix. The following result from \cite{Hae1995} gives algebraic bounds for induced subgraphs. We include the proof for completeness.

\begin{theorem}\cite[Theorem 3.5]{Hae1995}
Let $G$ be a $k$-regular graph on $n$ vertices and suppose that $G$ has an induced subgraph $G'$ with $n'$ vertices and $m'$ edges. Then 
\[
\lambda_{2} \geq \frac{2m'n-k (n')^2}{n'(n-n')} \geq \lambda_{n}.
\]
\end{theorem}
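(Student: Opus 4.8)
The plan is to apply the Cauchy interlacing theorem to a suitable $2\times 2$ quotient of the adjacency matrix $A$ of $G$. First I would partition the vertex set $V$ into the two blocks $S = V(G')$ and $V\setminus S$, of sizes $n'$ and $n-n'$ respectively, and form the quotient matrix $B$ of $A$ with respect to this partition; that is, the $(i,j)$ entry of $B$ is the average row sum of the corresponding block of $A$. Since $G$ is $k$-regular, the two row sums of $B$ must each equal $k$, which pins down the off-diagonal entries once the diagonal entries are known. The induced subgraph $G'$ has $m'$ edges, so the total number of edges of $A$ inside the block $S$ is $2m'$, giving average row sum $2m'/n'$ for the top-left block; hence
\[
B = \begin{pmatrix} \dfrac{2m'}{n'} & k - \dfrac{2m'}{n'} \\[2ex] k - \dfrac{2m'}{n'}\cdot\dfrac{n'}{n-n'} & k - \Bigl(k-\dfrac{2m'}{n'}\Bigr)\dfrac{n'}{n-n'} \end{pmatrix}.
\]

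Next I would compute the eigenvalues of $B$. One eigenvalue is $k$, corresponding to the all-ones vector (this reflects that the partition refines the trivial one and $k$ is the Perron eigenvalue of $A$). The other eigenvalue $\theta$ is then determined by the trace: $\theta = \operatorname{tr}(B) - k$. Carrying out the arithmetic, $\operatorname{tr}(B) = \dfrac{2m'}{n'} + k - \Bigl(k - \dfrac{2m'}{n'}\Bigr)\dfrac{n'}{n-n'}$, and after simplifying one obtains
\[
\theta \;=\; \frac{2m'n - k(n')^2}{n'(n-n')}.
\]
(I expect this simplification to be the only genuinely computational step, and it is routine: clear denominators over $n'(n-n')$ and collect terms.)

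Finally, I would invoke Cauchy interlacing: the eigenvalues of the quotient matrix $B$ interlace those of $A$ (this is the standard consequence of interlacing for equitable-or-not partitions; here it is [Hae1995, Theorem 2.1] applied to the matrix $B$ which is similar to a compression of $A$). Since $B$ is $2\times 2$ with eigenvalues $k = \lambda_1$ and $\theta$, interlacing gives $\lambda_2 \ge \theta \ge \lambda_n$, which is exactly the claimed inequality. The main obstacle, such as it is, is purely bookkeeping: making sure the quotient matrix is set up with the correct normalization (average row sums, not plain sums) so that one eigenvalue is genuinely $k$ and the other comes out to the stated expression; once that is correct, the interlacing step is immediate.
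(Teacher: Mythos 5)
Your proposal is correct and follows the same route as the paper: quotient the adjacency matrix by the two-block partition $\{V(G'),\,V\setminus V(G')\}$, observe that the quotient has $k$ as one eigenvalue (constant row sums), extract the other eigenvalue as $\mathrm{tr}(B)-k = \frac{2m'n-k(n')^2}{n'(n-n')}$, and conclude by Cauchy interlacing applied to the quotient matrix. One small slip: as written, your bottom-left entry reads $k - \frac{2m'}{n'}\cdot\frac{n'}{n-n'}$, but for the row sum to be $k$ (and to match the edge count between the two blocks, namely $n'k-2m'$ edges over $n-n'$ vertices) it should be $\bigl(k-\frac{2m'}{n'}\bigr)\cdot\frac{n'}{n-n'} = \frac{n'k-2m'}{n-n'}$; the rest of your argument implicitly uses the corrected value, so this is only a missing pair of parentheses.
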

\begin{proof}
Consider the partition $\pi=\{G',\ \overline{G'}\}$ of the vertex set. The corresponding quotient matrix is 
\[
\begin{pmatrix}
\frac{2m'}{n'} & k -\frac{2m'}{n'}\\
\frac{n'k-2m'}{n-n'} & k-\frac{n'k-2m'}{n-n'}
\end{pmatrix}.
\]
The eigenvalues of this matrix are $k$ and $\frac{2m'}{n'} - \frac{n'k-2m'}{n-n'} = \frac{2m'n-n'^2k}{n'(n-n')}$.
The result follows by Cauchy's Interlacing Theorem (see \cite[Theorem 2.1]{Hae1995}).
\end{proof}

We are now ready to prove Theorem~\ref{algbound}. 

\begin{proof}[Proof of Theorem~\ref{algbound}]
Let $F$ be an induced forest in $G$ on $f$ vertices with $c$ connected components. 
Since $F$ has exactly $f-c$ edges and $f$ vertices, using the above result, we have 
\[
\frac{2(f-c)n-f^{2}k}{f(n-f)} \geq \lambda_{n},
\]
and thus 
\[
(k-\lambda_{n})f^{2}+n(\lambda_{n}-2)f+2cn\leq 0.
\]
As $c\geq 1$, we have $(k-\lambda_{n})f^{2}+n(\lambda_{n}-2)f +2n\leq 0$, and thus $f \leq \frac{n(2-\lambda_{n}) + \sqrt{n^{2}(2-\lambda_{n})^2-8n(k-\lambda_{n})}}{2(k-\lambda_{n})}$. 
\end{proof}

We now use Theorem~\ref{algbound} to find the acyclic number of a small graph.
\begin{example}\label{exampleP9}
Consider the complement $\mathcal{P'}(9)$ of the Paley graph on $9$ vertices. The vertex set of this graph is the field $\bb{F}_{9}$ of size $9$; and two elements $a,b \in \bb{F}_{9}$ are adjacent if and only if $a-b$ is not a quadratic residue in $\bb{F}_{9}$. We identify $\bb{F}_{9} \cong \bb{F}_{3}[x]/\langle x^{2}+1\rangle$ and the set of quadratic residues is $S=\{\ov{0},\ \ov{1},\ \ov{2},\ \ov{x},\ \ov{2x}\}$. The induced subgraph $\bb{F}_3 \cup \{ \ov{x+1},\ \ov{x+2} \}$ is a path on 5 vertices, in $\mathcal{P'}(9)$. This construction implies that $\tau\left(\mathcal{P'}(9)\right) \geq  5$.  It is well-known that $\mathcal{P'}(9)$ is a strongly-regular graph whose specturm is $(4,\ 1,\ -2)$. Using Theorem~\ref{algbound}, we have $\tau(\mathcal{P'}(9)) < 6$. We note that Theorem~\ref{elembound} gives us the same upper bound. Thus we have $\tau(\mathcal{P'}(9)) = 5$. 
\end{example}

We were not able to extend this to other Paley graphs. In Section~\ref{large}, we present some observations (on the acyclic number) stemming from computations on small order Paley graphs.

\section{Graphs whose maximum induced forests are canonical.}\label{Can}

In this section, we characterize maximum induced forests in some families of regular graphs. In particular, we will prove Theorem~\ref{thm:examples} using a counting method for each graph.

Let $G$ be a regular graph. We recall that the order $\tau(G)$ of a maximum induced forest satisfies $\tau(G) \geq \alpha(G)+1$. To show that every maximum induced forest in $G$ is canonical, it suffices to show that $|F| < \alpha(G)+1$ for every non-canonical induced forest $F$. 
Note that an induced forest $F$ in $G$ is not canonical if and only if $F$ contains either a copy of $P_{4}$ (a path with $4$ vertices) or a copy of $P_{2}+P_{2}$ (the disjoint union of two edges) as an induced subgraph. We now find an upper bound on the order of an induced forest $F$ that does not contain either a $P_{4}$ or a $P_{2}+P_{2}$.

Given a pair $(a, b)$ of adjacent vertices in $G$, by $N(a,b)$, we denote the set of vertices in $G$ that are not adjacent to either of $a$ or $b$; and by $\eta(a,b)$, we denote $|N(a,b)|$.  We denote the maximum such value by 
\[
\eta(G) = \max \left \{\eta(a,b)\ |\ a,b\in G \text{ and } a\sim b \right \} . 
\]

\begin{lemma}\label{lem:count}
If $F$ is a non-canonical forest in a graph $G$, then $|F| \leq 2+ 2 \eta(G)$.
\end{lemma}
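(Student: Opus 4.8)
The plan is to use the structural characterization already noted: a non-canonical forest $F$ must contain an induced $P_4$ or an induced $P_2+P_2$. In either case $F$ contains an edge $\{a,b\}$ together with at least two further vertices, and the key observation is that *every* vertex of $F$ other than $a$ and $b$ must lie in $N(a,b)$ — for if some $v\in F$ were adjacent to $a$ or to $b$, then together with the structure forcing $F$ to be non-canonical we would create a cycle or a richer subgraph inside $F$. So first I would fix, inside $F$, an induced $P_4$ on vertices $a_1\sim a_2\sim a_3\sim a_4$, or an induced $P_2+P_2$ on $\{a_1,a_2\}\cup\{a_3,a_4\}$ with $a_1\sim a_2$, $a_3\sim a_4$. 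I then want to select a single edge $\{a,b\}$ of $F$ such that no vertex of $F\setminus\{a,b\}$ is adjacent to $a$ or $b$; granting this, $F\setminus\{a,b\}\subseteq N(a,b)$, whence $|F|\le 2+\eta(a,b)\le 2+\eta(G)$, which is in fact slightly stronger than the claimed bound.

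The main obstacle is producing that good edge $\{a,b\}$, and this is where I expect to have to split into the two cases. In the $P_2+P_2$ case the two edges $\{a_1,a_2\}$ and $\{a_3,a_4\}$ are non-adjacent, so any vertex $v\in F$ adjacent to, say, $a_1$ cannot also be adjacent to $a_2$ (else $a_1 a_2 v$ is a triangle in the forest $F$), and $v\notin\{a_3,a_4\}$; but $v$ could be adjacent to one of $a_3,a_4$. The point is that among the two edges at least one is "safe": if every vertex of $F$ outside $\{a_1,a_2\}$ avoids both $a_1$ and $a_2$, take $\{a,b\}=\{a_1,a_2\}$ and we are done; otherwise some $w\in F$ is adjacent to (exactly) one of $a_1,a_2$, and then $\{w,a_1\}$ or $\{w,a_2\}$ together with $a_3\sim a_4$ again forms an induced $P_2+P_2$-type configuration, and a short case analysis (using acyclicity to rule out any vertex of $F$ being adjacent to two vertices of a path or edge in $F$) pins down an edge none of whose endpoints has a neighbour in the rest of $F$. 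The $P_4$ case is similar but cleaner: the middle edge $\{a_2,a_3\}$ is the natural candidate, since a vertex $v\in F$ adjacent to $a_2$ or $a_3$ would, combined with the path $a_1 a_2 a_3 a_4$, create a short cycle unless $v\in\{a_1,a_4\}$ — and $a_1,a_4$ are themselves only adjacent to $a_2,a_3$ respectively within the path, so one checks directly that $F\setminus\{a_2,a_3\}\subseteq N(a_2,a_3)$.

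Having isolated such an edge $\{a,b\}$, the conclusion is immediate: $F\setminus\{a,b\}$ is a set of $|F|-2$ vertices, each non-adjacent to both $a$ and $b$, so $|F|-2\le \eta(a,b)\le\eta(G)$. I would write the final inequality as $|F|\le 2+\eta(G)$, noting in passing that the argument actually gives $|F|\le 2+\eta(a,b)$ for the specific edge found. The only genuinely delicate point is the bookkeeping in the $P_2+P_2$ case to guarantee that some edge of $F$ has *no* neighbours of $F$ among its endpoints; everything else is a direct appeal to the forest (acyclicity) hypothesis to forbid triangles and longer induced cycles.
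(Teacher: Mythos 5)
The central step in your plan is to locate a single edge $\{a,b\}$ of $F$ such that every vertex of $F\setminus\{a,b\}$ lies in $N(a,b)$; this would give the stronger bound $|F|\le 2+\eta(G)$. But such an edge need not exist, and in fact the strengthened bound is false, so the plan cannot be repaired.

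A counterexample is built directly from the paper's own discussion of $K(n,2)$. Take the seven vertices
\[
A=\{1,2\},\ B=\{3,4\},\ C=\{1,3\},\ D=\{2,4\},\ E=\{1,4\},\ G=\{2,3\},\ H=\{1,5\}
\]
of $K(5,2)$. The induced subgraph has edge set $\{AB,\ CD,\ EG,\ HB,\ HD,\ HG\}$ and is a tree (a spider: $H$ joined to $B$, $D$, $G$, each of which has one pendant). It is non-canonical (it contains, for instance, the induced path $A\,B\,H\,D$). Now check every edge: for $AB$, the vertex $H$ is adjacent to $B$; for $HB$, the vertex $A$ is adjacent to $B$; and symmetrically for all remaining edges. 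So \emph{no} edge $\{a,b\}$ of this $F$ has $F\setminus\{a,b\}\subseteq N(a,b)$. Moreover $\eta(K(n,2))=4$ (the only sets meeting both $\{1,2\}$ and $\{3,4\}$ are $\{1,3\},\{1,4\},\{2,3\},\{2,4\}$), so this $|F|=7$ violates your proposed bound $2+\eta(G)=6$ while respecting the paper's bound $2+2\eta(G)=10$.

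The specific misstep in your $P_4$ argument is the claim that a vertex $v\in F$ adjacent to the middle vertex $a_2$ (or $a_3$) "would create a short cycle unless $v\in\{a_1,a_4\}$." That is not so: a vertex $v$ adjacent to $a_2$ alone and to none of $a_1,a_3,a_4$ just hangs as a pendant off $a_2$, which is perfectly compatible with $F$ being a forest. The paper's proof avoids this trap by observing only that each $v\in F\setminus P$ has at most one neighbour on $P$, hence misses at least one leaf together with that leaf's neighbour; this gives $F\subseteq N(a,b)\cup N(c,d)\cup\{b,c\}$ and the factor of $2$ in front of $\eta(G)$ is genuinely necessary.
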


\begin{proof}
First assume that $F$ contains a path on four vertices; call this subgraph $P$. Since $F$ is a forest, every $v \in F \setminus P$ is adjacent to at most one vertex of $P$. Therefore, every $v \in F \setminus P$ is non-adjacent to at least one leaf and the neighbour of that leaf in $P$. Suppose that $P$ is made up of vertices $\{a,b,c,d\}$ with $a\sim b$, $b \sim c$ and $c \sim d$. Then we see that $F \subset N(a,b) \cup N(c,d) \cup \{b,c\}$, completing the proof in this case.

Next assume that $F$ does not contain a path on four vertices but has an induced subgraph $Q$ that is isomorphic to $P_{2}+P_{2}$. Let $Q$ be made up of vertices $\{a,b,c,d\}$ such that $a\sim b$ and $c \sim d$. Since $F$ is a forest that does not contain a path on four vertices, every $v \in F \setminus Q$ is adjacent to at most one vertex of $Q$, so is non-adjacent to a pair of adjacent vertices of $Q$. Thus we have $F \subseteq N(a,b) \cup N(c,d)$. 
\end{proof}

This lemma is particularly applicable to strongly-regular graphs since the value of $\eta(\alpha,\beta)$ is the same for all pairs $(\alpha,\beta)$ of adjacent vertices. We now recall that given $n,k,a,c \in \bb{N}$, a strongly-regular graph with parameters $(n, k: a, c)$ is a $k$-regular graph on $n$ vertices such that (i) every pair of adjacent vertices have exactly $a$ neighbours in common; and (ii) every pair of non-adjacent vertices have exactly $c$ neighbours in common. Using inclusion-exclusion on the parameters of a strongly-regular graph to get the value of $\eta(\alpha,\ \beta)$ yields the following result.

\begin{corollary}
Let $G$ be a strongly-regular graph with parameters $(n, k: a, c)$. If
\[
1+2(n-2k+a) < \alpha(G),
\] 
then every maximum induced forest is a canonical induced forest.
\end{corollary}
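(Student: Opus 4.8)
The plan is to combine Lemma~\ref{lem:count} with a computation of $\eta(G)$ for a strongly-regular graph. Since a strongly-regular graph is edge-transitive in the sense that the parameter $a$ counts common neighbours of any adjacent pair, the value $\eta(\alpha,\beta)$ does not depend on the choice of edge $\{\alpha,\beta\}$, so $\eta(G) = \eta(\alpha,\beta)$ for any fixed edge. Thus the first step is to pin down this common value by inclusion-exclusion.

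Concretely, fix an edge $\{\alpha,\beta\}$ in $G$. The set $N(\alpha,\beta)$ of vertices adjacent to neither $\alpha$ nor $\beta$ is the complement, within $V$, of $\{\alpha,\beta\} \cup N(\alpha) \cup N(\beta)$. I would count $|\{\alpha,\beta\} \cup N(\alpha) \cup N(\beta)|$: we have $|\{\alpha,\beta\}| = 2$; the neighbourhoods $N(\alpha)$ and $N(\beta)$ each have size $k$ but contain the opposite endpoint (since $\alpha \sim \beta$), so outside $\{\alpha,\beta\}$ they contribute $k-1$ vertices each; and the overlap $N(\alpha)\cap N(\beta)$ consists of the $a$ common neighbours of the adjacent pair $\alpha,\beta$, none of which equals $\alpha$ or $\beta$. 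By inclusion-exclusion, $|\{\alpha,\beta\}\cup N(\alpha)\cup N(\beta)| = 2 + 2(k-1) - a = 2k - a$, hence $\eta(G) = \eta(\alpha,\beta) = n - (2k - a) = n - 2k + a$.

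Plugging this into Lemma~\ref{lem:count}, any non-canonical induced forest $F$ satisfies $|F| \le 2 + 2\eta(G) = 2 + 2(n - 2k + a) = 1 + 2(n-2k+a) + 1$. Under the hypothesis $1 + 2(n-2k+a) < \alpha(G)$, this gives $|F| \le 1 + 2(n-2k+a) + 1 < \alpha(G) + 1$. On the other hand, as recalled in the text, $\tau(G) \ge \alpha(G) + 1$ for any non-empty graph, so any maximum induced forest has order at least $\alpha(G)+1$ and therefore cannot be non-canonical. Hence every maximum induced forest of $G$ is canonical.

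There is essentially no obstacle here: the only thing requiring care is the bookkeeping in the inclusion-exclusion count, specifically remembering that $\alpha \in N(\beta)$ and $\beta \in N(\alpha)$ because the pair is \emph{adjacent}, which is why the exponent of $k$ drops to $k-1$, and that the $a$ common neighbours are automatically distinct from $\alpha$ and $\beta$. One should also note the edge case where $G$ has no edges at all (so no adjacent pair exists); but then the hypothesis involving $\eta(G)$ is vacuous and the statement is trivial, or one simply assumes $G$ has an edge, which is implicit in invoking the strongly-regular parameters with a defined $a$.
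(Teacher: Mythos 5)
Your proof is correct and follows exactly the paper's intended route: compute $\eta(G)=n-2k+a$ for any adjacent pair via inclusion-exclusion on the strongly-regular parameters, plug into Lemma~\ref{lem:count} to bound non-canonical forests by $2+2(n-2k+a)$, and compare with $\alpha(G)+1$. The bookkeeping (in particular that $\alpha\in N(\beta)$ and $\beta\in N(\alpha)$, and that the $a$ common neighbours avoid $\alpha,\beta$) is handled correctly, matching the paper's one-line justification.
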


This corollary can be used to prove that for $n \geq 17$ the maximum forests in $K(n,2)$ are canonical (we omit this proof, since Theorem~\ref{thm:Kneserk2} gives a stronger result).

In the following subsections, we apply Lemma~\ref{lem:count} to show that maximum induced forests in some families of graphs must be canonical.

\subsection{Kneser Graphs}
\label{subsec:kneser}
 
In this section we consider the Kneser graphs $K(n,k)$ with $n \geq 2k$. The graph $K(2k, k)$ consists of exactly $\frac12\binom{2k}{k}$ disjoint edges and is itself a forest, so we will only consider $n>2k$. It is well known from the Erd\H{o}s-Ko-Rado Theorem~\cite{EKR61} that the order of a maximum coclique in $K(n,k)$ is ${n-1 \choose k-1}$ and that the Delsarte-Hoffman ratio bound holds with equality. Thus a canonical forest has order ${n-1 \choose k-1} + 1$. We will show for $n$ large relative to $k$ that this is the largest possible induced forest.

\begin{theorem}\label{Kneser}
For every $k \geq 2$ and $n \geq 2k^3$, we have
\[
\tau(K(n, k)) = \binom{n-1}{k-1} + 1.
\] 
Moreover, every maximum induced forest is a canonical induced forest. 
\end{theorem}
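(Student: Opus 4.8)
The plan is to combine Lemma~\ref{lem:count} with a direct construction of a canonical forest and a computation of $\eta(K(n,k))$. Since a canonical forest has order $\binom{n-1}{k-1}+1$, it suffices to show that every non-canonical induced forest $F$ satisfies $|F|<\binom{n-1}{k-1}+1$; by Lemma~\ref{lem:count} this follows once we prove $2+2\eta(K(n,k))\le\binom{n-1}{k-1}$ for $n\ge 2k^3$. So the heart of the matter is to estimate $\eta(K(n,k))$, i.e.\ the maximum number of $k$-sets that meet both $A$ and $B$, where $A,B$ is a fixed disjoint pair of $k$-sets (adjacency in the Kneser graph means disjointness). The vertices not adjacent to $A$ or to $B$ are exactly the $k$-sets intersecting both $A$ and $B$; since $A,B$ are fixed and disjoint, $\eta(A,B)$ does not depend on the choice, so $\eta(K(n,k))=\eta(A,B)$ for any such pair.

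First I would count $\eta(A,B)$ exactly (or bound it cleanly). A $k$-set $C$ that meets both $A$ and $B$ must contain at least one of the $k$ elements of $A$ and at least one of the $k$ elements of $B$. A crude union bound gives $\eta(A,B)\le \sum_{a\in A}\sum_{b\in B}\#\{C : a,b\in C\} = k^2\binom{n-2}{k-2}$, since fixing two specified elements in $C$ leaves $\binom{n-2}{k-2}$ completions. (One could do better with inclusion–exclusion, but this bound should already suffice.) Hence it is enough to verify
\[
2+2k^2\binom{n-2}{k-2}\le \binom{n-1}{k-1}
\]
for $n\ge 2k^3$.

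The next step is this elementary inequality. Writing $\binom{n-1}{k-1}=\frac{n-1}{k-1}\binom{n-2}{k-2}$, the inequality (ignoring the additive $2$, which is absorbed easily for $n$ large) reduces to roughly $\frac{n-1}{k-1}\ge 2k^2$, i.e.\ $n\gtrsim 2k^3$, which is exactly the hypothesis. I would carry out this comparison carefully, checking that the threshold $n\ge 2k^3$ genuinely dominates the lower-order terms and the $+2$; the small cases ($k=2$, or $n$ near $2k^3$) may need a line of explicit arithmetic but nothing deeper. Note $\binom{n-2}{k-2}$ requires $k\ge2$, which is given, and for $k=2$ the bound reads $2+2\cdot4\cdot1\le n-1$, i.e.\ $n\ge 11$, comfortably below $2k^3=16$.

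Finally, for the lower bound $\tau(K(n,k))\ge\binom{n-1}{k-1}+1$: take the canonical coclique of all $k$-sets containing a fixed point $x$, and adjoin any $k$-set $D$ not containing $x$. The neighbours of $D$ inside the coclique are the $k$-sets through $x$ disjoint from $D$, and since any two such $k$-sets both contain $x$ they are pairwise non-adjacent; thus $D$'s neighbourhood in the added set is an independent set, so the induced graph is a forest (a star forest, in fact). This gives equality. The main obstacle I anticipate is purely bookkeeping: getting the binomial-coefficient inequality to hold exactly at the stated threshold $n\ge 2k^3$ rather than at some slightly larger bound — if the union bound on $\eta$ is too lossy one may instead need the exact value $\eta(A,B)=\binom{n}{k}-2\binom{n-k}{k}+\binom{n-2k}{k}$ and expand it asymptotically. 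That is why I would keep the exact expression for $\eta$ in reserve in case the crude estimate falls short.
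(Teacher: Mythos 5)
Your proposal is correct and follows essentially the same route as the paper: bound $\eta(K(n,k))\le k^2\binom{n-2}{k-2}$ by a union-bound count, invoke Lemma~\ref{lem:count} to get $|F|\le 2+2k^2\binom{n-2}{k-2}$ for any non-canonical forest $F$, and compare with $\binom{n-1}{k-1}=\frac{n-1}{k-1}\binom{n-2}{k-2}$ to see that $n\ge 2k^3$ suffices. The only difference is that you spell out the binomial-coefficient arithmetic (and the $k=2$ check) that the paper states without detail; the exact inclusion--exclusion value of $\eta$ that you keep in reserve is indeed unnecessary.
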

\begin{proof}
Let $\gamma$ and $\delta$ be a pair of adjacent vertices in $K(n, k)$. Elementary counting arguments (overcounting sets whose intersection with $\gamma$ or $\delta$ has cardinality greater than $1$) show that there are at most $k^{2}{n-2 \choose k-2}$ $k$-subsets of $[n]$ intersecting both $\gamma$ and $\delta$. Thus in this case, we have $\eta(K(n,k))\le k^{2}{n-2 \choose k-2}$. 

By Lemma~\ref{lem:count}, if $F$ is a non-canonical induced forest, then $|F| \leq 2 + 2 k^{2}{n-2 \choose k-2}$. In the case $n\geq 2k^{3}$, we have \[
2 + 2 k^{2}{n-2 \choose k-2} < 1+{n-1 \ \choose k-1}.
\] 
Therefore non-canonical induced forests are smaller than the canonical induced forests.
\end{proof}

We consider one special case of Kneser graphs, in which the same sort of counting can be done more precisely.

\begin{theorem}\label{thm:Kneserk2}
For $n \geq 5$
\[
\tau(K(n, 2)) = \max \{ n, 7 \}.
\]
If $n > 7$, every maximum induced forest in $K(n,2)$ is canonical.
\end{theorem}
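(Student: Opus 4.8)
The plan is to handle the two regimes $n \le 7$ and $n > 7$ separately, since the maximum of $n$ and $7$ switches which construction wins. For small $n$ (roughly $5 \le n \le 7$) I would simply exhibit an induced forest on $7$ vertices and check by hand (or cite a small computation) that no larger induced forest exists; here $\binom{n-1}{1}+1 = n \le 7$, so the canonical forest is not the extremal one and the bound is genuinely combinatorial. For the main range $n > 7$, the target is $\tau(K(n,2)) = n = \binom{n-1}{1}+1$, matching the canonical-forest lower bound, and the claim that every maximum induced forest is canonical.

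First I would establish the lower bound: a maximum coclique in $K(n,2)$ is a ``star'' $\{\{1,j\} : 2 \le j \le n\}$ of size $n-1$, and adding any vertex disjoint from $\{1,j\}$ for some $j$ (e.g.\ the vertex $\{2,3\}$) creates a forest of order $n$, so $\tau(K(n,2)) \ge n$. (For $n \le 7$ one instead checks $\tau \ge 7$ directly.) Next, for the upper bound when $n > 7$, I would invoke Lemma~\ref{lem:count}: it suffices to bound $|F|$ for non-canonical $F$. Rather than using the crude estimate $\eta(K(n,k)) \le k^2\binom{n-2}{k-2}$ from Theorem~\ref{Kneser}, I would compute $\eta(K(n,2))$ exactly. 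For adjacent (i.e.\ disjoint) $2$-sets $\{a,b\}$ and $\{c,d\}$, the vertices non-adjacent to $\{a,b\}$ are the $2$-sets meeting $\{a,b\}$, and $N(\{a,b\},\{c,d\})$ consists of $2$-sets meeting both $\{a,b\}$ and $\{c,d\}$; a short count gives $\eta(\{a,b\},\{c,d\}) = 4$ (the sets $\{a,c\},\{a,d\},\{b,c\},\{b,d\}$), so $\eta(K(n,2)) = 4$ and Lemma~\ref{lem:count} yields $|F| \le 2 + 2\cdot 4 = 10$ for non-canonical $F$. This already forces $\tau(K(n,2)) = n$ once $n \ge 11$, but leaves the cases $n \in \{8,9,10\}$ where $n \le 10$; those I would dispatch by a sharper ad hoc argument or a finite check, showing that a non-canonical induced forest in $K(n,2)$ actually has at most $n-1$ vertices (or at most $7$), which is the real obstacle.

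The hard part will be closing the gap between the Lemma-\ref{lem:count} bound of $10$ and the exact value $n$ for the small surviving cases $n = 8, 9, 10$ (and pinning down the $n \le 7$ values). I expect the cleanest route is to improve Lemma~\ref{lem:count} in the $K(n,2)$ setting by exploiting the specific structure of $P_4$'s and $(P_2+P_2)$'s in $K(n,2)$: an induced $P_4$ or $P_2+P_2$ uses very few ground-set elements (at most $6$ or $8$ respectively, but in fact the non-adjacency constraints pin almost everything down), so one can show that every vertex of $F$ outside such a configuration must use one of a bounded number of ``hub'' elements, forcing $|F|$ to be small unless $F$ is in fact a star-plus-a-vertex. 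A careful case analysis on the ground-set elements appearing in a putative non-canonical $F$ should show $|F| \le 7 < n$ for all $n \ge 8$, completing both parts of the theorem.
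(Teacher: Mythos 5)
Your overall approach matches the paper's: compute or bound $\eta(K(n,2))$, invoke Lemma~\ref{lem:count} to bound non-canonical forests, and then sharpen by a case analysis on the structure of $P_4$'s and $P_2+P_2$'s in $K(n,2)$. However, the proposal defers the crux --- the case analysis that shows a non-canonical induced forest has at most $7$ vertices --- to ``a careful case analysis \ldots\ should show $|F| \le 7$'', without carrying it out. That case analysis \emph{is} the content of the theorem (together with the $7$-vertex construction), and it is needed uniformly for all $n$: the Lemma~\ref{lem:count} detour giving $|F| \le 2+2\eta = 10$ is correct but ends up superfluous, since you still need the sharp bound for $n \in \{8,9,10\}$ and, once you have it, the split at $n=11$ and the separate treatment of $5 \le n \le 7$ add nothing. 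You also do not give an explicit $7$-vertex forest; one is needed for the lower bound when $n \le 7$.

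To close the gap, the structural facts you wave at need to be pinned down exactly, and they are tighter than your estimates. An induced $P_4$ in $K(n,2)$ uses \emph{exactly} $5$ ground elements (not ``at most $6$''): up to relabeling its vertices are $\{a,b\},\{c,d\},\{a,e\},\{b,c\}$. Any further vertex of $F$ is adjacent to at most one of these four, which forces it to be a $2$-subset of $\{a,b,c,d,e\}$; checking the handful of remaining $2$-subsets of a $5$-element set and their mutual adjacencies gives $|F| \le 7$. An induced $P_2+P_2$ uses \emph{exactly} $4$ elements: its vertices are $\{a,b\},\{c,d\},\{a,c\},\{b,d\}$. If $F$ has no $P_4$ (the other case is already covered), then $F$ contains no vertex $\{x,e\}$ with $x \in \{a,b,c,d\}$ and $e$ new --- such a vertex would create a $P_4$ --- nor any vertex with both elements new (adjacent to all four), so $F \subseteq \binom{\{a,b,c,d\}}{2}$, a copy of $K(4,2)$, i.e.\ a perfect matching, giving $|F| \le 6$. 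With these two bounds in hand, together with an explicit $7$-vertex forest such as $\{1,2\},\{3,4\},\{1,3\},\{2,4\},\{1,4\},\{2,3\},\{1,5\}$, the theorem follows for all $n \ge 5$ at once.
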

\begin{proof}
For any $n$, a canonical forest in $K(n,2)$ has order $n$. Further, $\tau(K(n,2)) \geq 7$ for any $n \geq 5$, this is seen by taking the following vertex set:
\[
\left \{ \{1,2\},\{3,4\},  \{1,3\},\{2,4\},  \{1,4\},\{2,3\}, \{1,5\}   \right \}.
\]

Recall that any non-canonical forest contains either a copy of $P_4$ or a copy of $P_2+P_2$.

Assume $F$ is an induced forest in $K(n, 2)$. If $F$ contains a copy of $P_4$, then the vertices of this $P_4$ must be the sets $\{a,b\}, \{c,d\}, \{a,e\},\{b,c\}$ for some $a,b,c,d,e$. Any other vertex in $F$ is 
adjacent to at most one of these vertices. There are only $6$ other vertices in $K(n,2)$ that are nonadjacent to any of the pairs of adjacent vertices on this path, and exactly $3$ of these vertices are nonadjacent to $3$ vertices of the path ($\{a,c\}, \{b,d\},$ and $\{b,e\}$). So any such $F$ contains at most $7$ vertices. 

Similarly, if $F$ contains a copy of $P_2 + P_2$, then the vertices of this subgraph must be the sets $\{a,b\}, \{c,d\},\{a,c\},\{b,d\}$ for some $a,b,c,d$. If $F$ has no $P_4$, then $F$ cannot include any vertex of the form $\{a,e\},\{b,e\},\{c,e\},$ or $\{d,e\}$ (for any $e \notin \{a,b,c,d\}$). Since any vertex in $F$ must be nonadjacent to at least $2$ of the vertices of the $P_2 +P_2$, this implies that the elements of the $2$-set defining the vertex must lie entirely in $\{a,b,c,d\}$, so there are only $2$ other vertices that can be added: $\{a,d\}$ and $\{b,c\}$. So any such $F$ contains at most $6$ vertices.

Therefore any induced forest that is not canonical contains no more than $7$ vertices and the result follows.
\end{proof}

\subsection{q-Kneser Graphs}

The next family we consider is the $q$-Kneser graphs. Let $n, k$ be positive integers with $n\geq 2k$, and $q$ be a power of a prime. The vertex set of the graph $K_{q}(n, k)$ is the set of all $k$-dimensional subspaces of $\bb{F}_{q}^{n}$; two vertices are adjacent if and only if they intersect trivially. It is well known that the cardinality of a coclique in this graph is ${n-1 \choose k-1}_{q}$, and that the Delsarte-Hoffman ratio bound holds with equality (see \cite{FW86} or \cite[Chapter 9]{GMbook} for notation and details). The canonical induced forests have $1+ {n-1 \choose k-1}_{q}$ vertices. We obtain the following characterization of maximum induced forests in $q$-Kneser graphs.

\begin{theorem}\label{q-Kneser}
For $k\geq 2$, $n>3k-2$ and $q$ sufficiently large, we have
\[
\tau(K_{q}(n, k)) = \binom{n-1}{k-1}_{q} + 1.
\] 
Moreover, every maximum induced forest is canonical.
\end{theorem}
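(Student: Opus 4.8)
The plan is to mirror the proof of Theorem~\ref{Kneser} for the ordinary Kneser graph, replacing the binomial-coefficient estimates with their Gaussian-binomial analogues and tracking the dependence on $q$. By Lemma~\ref{lem:count}, it suffices to show that for $q$ large enough (relative to $n$ and $k$), every non-canonical induced forest $F$ satisfies $|F| \le 2 + 2\eta(K_q(n,k)) < 1 + \binom{n-1}{k-1}_q$, since a canonical forest already attains $1 + \binom{n-1}{k-1}_q$ and $\tau \ge \alpha + 1$ always holds.

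The key step is to bound $\eta(K_q(n,k))$, i.e., for a fixed pair of adjacent (trivially intersecting) $k$-subspaces $U, W$ of $\bb{F}_q^n$, to bound the number of $k$-subspaces $V$ that meet both $U$ and $W$ nontrivially. First I would count, for each $i \ge 1$, the number of $k$-subspaces $V$ with $\dim(V \cap U) = i$: this is at most $\binom{k}{i}_q$ (choices for $V \cap U$ inside $U$) times $\binom{n-i}{k-i}_q$ (extensions to a $k$-space), and summing over $i \ge 1$ and doing the analogous count for $W$ gives an upper bound of the shape $2\left(\sum_{i=1}^{k}\binom{k}{i}_q \binom{n-i}{k-i}_q\right)$. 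The dominant term as $q \to \infty$ is the $i=1$ term, which has $q$-degree equal to $\deg_q\binom{k}{1}_q + \deg_q\binom{n-1}{k-1}_q = (k-1) + (k-1)(n-k)$. Meanwhile $\binom{n-1}{k-1}_q$ has $q$-degree $(k-1)(n-k)$. Thus $2 + 2\eta(K_q(n,k))$ grows like $q^{(k-1)(n-k)+k-1}$ while the canonical bound grows like $q^{(k-1)(n-k)}$ — which looks the wrong way around, so the naive bound is insufficient and the counting must be sharpened.

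The fix, and the place where the hypothesis $n > 3k-2$ enters, is to observe that the vertices $V$ being counted must actually lie inside a forest $F$ containing a $P_4$ or $P_2+P_2$, so $V$ is nonadjacent to a \emph{pair of adjacent} vertices $a,b$ of that configuration; i.e., $V$ meets \emph{both} $a$ and $b$ nontrivially simultaneously, and moreover every such $V$ used in the bound of Lemma~\ref{lem:count} fails to be adjacent to one of two fixed edges. So the quantity to estimate is really $\eta(a,b) = $ number of $k$-spaces meeting both $a$ and $b$, with $a \cap b = 0$; but one does better by re-examining Lemma~\ref{lem:count}'s argument: in the $P_4$ case $F \subseteq N(a,b) \cup N(c,d) \cup \{b,c\}$ where $\{a,d\}$ is an edge (the two end-edges of the path are vertex-disjoint edges $ab$ — wait, $a\sim b$, $c\sim d$ with $a,b,c,d$ the path in order so the end edges are $ab$ and $cd$), and these are a pair of disjoint edges, i.e., a $4$-clique's worth of structure. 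The sharper count fixes \emph{two} disjoint $k$-spaces $a$ and $b$ (with $a\cap b=0$, so $\dim(a+b)=2k$, which forces $n\ge 2k$) and counts $k$-spaces $V$ meeting \emph{both} nontrivially; here for each pair $(i,j)$ with $i,j\ge 1$ the count is at most $\binom{k}{i}_q\binom{k}{j}_q\binom{n-i-j}{k-i-j}_q$ provided $i+j\le k$, and this requires $n-i-j \ge k-i-j$, always true, but more importantly the top $q$-degree is now $(k-1)(k-1)\cdot$(something smaller). I would carefully compute that the maximal $q$-degree of $\sum_{i,j\ge 1}\binom{k}{i}_q\binom{k}{j}_q\binom{n-i-j}{k-i-j}_q$ is strictly less than $(k-1)(n-k) = \deg_q\binom{n-1}{k-1}_q$ precisely when $n > 3k-2$; the extremal $(i,j)=(1,1)$ term contributes degree $2(k-1) + (k-2)(n-k-1)$ — wait I need to recompute $\deg_q \binom{n-2}{k-2}_q = (k-2)(n-k)$ — so the $(1,1)$ term has degree $2(k-1)+(k-2)(n-k)$, and comparing $2(k-1)+(k-2)(n-k) < (k-1)(n-k)$ gives $2(k-1) < (n-k)$, i.e., $n > 3k-2$. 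That is exactly the stated hypothesis.

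Once the $q$-degree comparison is established, the rest is a matter of making ``$q$ sufficiently large'' explicit: both sides are polynomials in $q$ with leading terms of known degree, the right side having strictly larger degree, so there is a threshold $q_0(n,k)$ beyond which $2 + 2\eta(K_q(n,k)) < 1 + \binom{n-1}{k-1}_q$; combined with Lemma~\ref{lem:count} this shows every maximum induced forest is canonical, hence has order exactly $1 + \binom{n-1}{k-1}_q$. The main obstacle, then, is purely bookkeeping: correctly bounding the number of $k$-spaces meeting two fixed disjoint $k$-spaces (handling the range constraint $i+j \le k$, and being careful that when $i+j > k$ the intersection is forced to be nontrivial in a way that still fits the bound), and then carrying out the $q$-degree comparison cleanly enough to read off that $n > 3k-2$ is the right condition. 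I expect no conceptual difficulty beyond what already appears in Theorem~\ref{Kneser}; the subtlety is just that the ordinary-Kneser proof used the crude ``intersects both $\gamma$ and $\delta$'' bound, whereas here one must use the disjoint-pair-of-edges structure to win the degree race.
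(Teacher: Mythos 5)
Your proposal is correct and takes essentially the same approach as the paper: bound $\eta(K_q(n,k))$ by counting $k$-spaces that meet both of two fixed disjoint $k$-spaces nontrivially, compare $q$-degrees against $\binom{n-1}{k-1}_q$, and read off the threshold $n>3k-2$. The paper's version of the count is slightly slicker — every such $V$ contains a plane $\langle x\rangle+\langle y\rangle$ with $x\in\gamma\setminus\{0\}$, $y\in\delta\setminus\{0\}$, so one overcounts directly by $\binom{k}{1}_q^2\binom{n-2}{k-2}_q$ in a single step rather than summing over intersection dimensions $(i,j)$ — but that expression is exactly your dominant $(i,j)=(1,1)$ term, the degree comparison $2(k-1)+(k-2)(n-k)<(k-1)(n-k)$ is the same, and both yield the condition $n>3k-2$; your sum just requires the (easy) additional check that the $(1,1)$ term dominates, which you correctly flag.
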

\begin{proof}
Let $\gamma$ and $\delta$ be two adjacent vertices in $K(n, k)_q$. If $\omega$ is a $k$-subspace intersecting non-trivially with both $\gamma$ and $\delta$, then it contains a subspace of the form $\langle x \rangle+\langle y \rangle$, where $x \in \gamma \setminus \{0\}$ and $y \in \delta \setminus \{0\}$. A subspace of the form $\langle x \rangle+\langle y \rangle$ can be chosen in ${k \choose 1}_{q}^{2}$ ways. It is a well known fact that there are ${n-2 \choose k-2}_{q}$ subspaces of dimension $k$, which contain a specific $2$-dimensional subspace. Thus we have $\eta(K_{q}(n,k)) \leq  {k \choose 1}_{q}^{2}{n-2 \choose k-2}_{q}.$

If $F$ is a non-canonical induced forest, then by Lemma~\ref{lem:count}, we have 
\[
|{F}| \leq 2 + 2{k \choose 1}_{q}^{2}{n-2 \choose k-2}_{q}.
\]

We will now show that, provided $n>3k-2$ and $q$ sufficiently large,  this upper bound is smaller than $1+ \binom{n-1}{k-1}_{q}$.
Since ${n-1 \choose k-1}_{q}= \dfrac{{n-1 \choose 1}_{q}}{{k-1 \choose 1}_{q}}{n-2 \choose k-2}_{q}$, we have  
\begin{align*}
1+{n-1 \choose k-1}_{q} -2- 2{k \choose 1}_{q}^{2}{n-2 \choose k-2}_{q} &= {n-2 \choose k-2}_{q} \left( \dfrac{{n-1 \choose 1}_{q} - 2{k-1 \choose 1}_{q}{k \choose 1}_{q}^{2}}{{k-1 \choose 1}_{q}} \right)-1.  
\end{align*}
Expanding the $q$-binomial coefficients gives that
\[
{n-1 \choose 1}_{q}  - 2{k-1 \choose 1}_{q}{k \choose 1}_{q}^{2} 
 = 
  \frac{q^{n-1} -1}{q-1} - 2 \left( \frac{q^{k-1}-1}{q-1} \right) \left( \frac{q^k-1}{q-1}\right)^2 ,
\]
and, provided that $n-2>3k-4$, this is a monic polynomial of degree $n-2$ and hence positive for a sufficiently large $q$. 
So for $n>3k-2$ and $q$ sufficiently large, the order of any forest is bounded above by $1+ {n-1 \choose k-1}_{q}$, and this bound is met by only canonical forests. 
\end{proof}

As in the case of Kneser graphs, we consider the special case of strongly-regular $q$-Kneser graphs with $k=2$, in which the same sort of counting can be done more precisely. In particular, the following result gives a complete characterization of maximum forests in $K_{q}(n,2)$  provided $n\geq 4$.

\begin{theorem}\label{cor:qKneserk2}
For $n \geq 4$
\[
\tau(K_{q}(n, 2)) = \max \left\{ {n-1 \choose 1}_{q}+1, 8 \right\}.
\]
If $(n,q)\neq(4,2)$, then every maximum induced forest in $K(n,2)$ is canonical.
\end{theorem}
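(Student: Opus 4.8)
The plan is to mirror the proof of Theorem~\ref{thm:Kneserk2}, but with linear-algebraic counting in place of the elementary set counting. A canonical forest has order $\binom{n-1}{1}_q+1$ because the Delsarte--Hoffman bound is tight for $K_q(n,2)$; and since $\binom{n-1}{1}_q\ge\binom{3}{1}_q=q^2+q+1\ge 7$ for $n\ge 4$ (with equality throughout only for $(n,q)=(4,2)$), this quantity is always at least $8$. Hence $\tau(K_q(n,2))\ge\max\{\binom{n-1}{1}_q+1,8\}$, and to obtain the formula it suffices to bound the order of a non-canonical induced forest. In the case $(n,q)=(4,2)$, where the canonical order is exactly $8$, I would also exhibit an explicit non-canonical induced forest of order $8$ inside $\bb{F}_2^4$: take the six ``coordinate'' $2$-spaces with respect to a fixed basis, which induce a perfect matching ($P_2+P_2+P_2$) in $K_2(4,2)$, and adjoin two further $2$-spaces through a common line, chosen so that the eight vertices induce a tree. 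This shows the maximum there is attained by a non-canonical forest, which is why $(4,2)$ is excepted in the statement.

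The structural core of the argument is the following. If $\{A,B\}$ is an edge of $K_q(n,2)$, that is, $A\cap B=0$, then every $2$-space meeting both $A$ and $B$ nontrivially has the form $\langle x\rangle+\langle y\rangle$ with $0\ne x\in A$ and $0\ne y\in B$; there are exactly $(q+1)^2$ such spaces and none is $A$ or $B$, so $\eta(A,B)=(q+1)^2$. Moreover two such spaces $\langle x\rangle+\langle y\rangle$ and $\langle x'\rangle+\langle y'\rangle$ are disjoint precisely when $\langle x\rangle\ne\langle x'\rangle$ and $\langle y\rangle\ne\langle y'\rangle$. Consequently the subgraph of $K_q(n,2)$ induced on $N(A,B)$ is isomorphic to the tensor graph $\otimes^2 K_{q+1}$, the two factors being indexed by the lines of $A$ and the lines of $B$.

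Now let $F$ be a non-canonical induced forest. By Lemma~\ref{lem:count} it contains an induced $P_4$, say $A$-$B$-$C$-$D$, with $F\subseteq N(A,B)\cup N(C,D)\cup\{B,C\}$, or an induced $P_2+P_2$ on edges $\{A,B\}$ and $\{C,D\}$, with $F\subseteq N(A,B)\cup N(C,D)$. Since $F\cap N(A,B)$ and $F\cap N(C,D)$ are induced forests inside copies of $\otimes^2 K_{q+1}$, we get $|F|\le 2\tau(\otimes^2 K_{q+1})+2$, and even $|F|\le 2\tau(\otimes^2 K_{q+1})$ in the $P_2+P_2$ case. By Theorem~\ref{thm:examples}(4) we have $\tau(\otimes^2 K_{q+1})=q+2$ for $q\ge 4$, while for $q\in\{2,3\}$, Theorem~\ref{TensorPsrg}, together with the identification $\otimes^2 K_3\cong\mathcal{P'}(9)$ and Example~\ref{exampleP9}, gives $\tau(\otimes^2 K_3)=\tau(\otimes^2 K_4)=5$. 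Comparing $2\tau(\otimes^2 K_{q+1})+2$ with the canonical order $\binom{n-1}{1}_q+1$ then settles every case with $(n,q)\ne(4,2)$: for $n\ge 5$ the inequality is immediate, and for $n=4$ it reads $12<14$ when $q=3$ and $2q+6<q^2+q+2$ when $q\ge 4$. In each of these cases a non-canonical forest is strictly smaller than a canonical one, so $\tau=\binom{n-1}{1}_q+1$ and every maximum forest is canonical.

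The one remaining case, $K_2(4,2)$, is the main obstacle: the bound above only gives $|F|\le 12$, whereas we must prove $\tau(K_2(4,2))=8$. Since $\alpha(K_2(4,2))=7$, no forest of order $9$ is canonical, so it remains to rule out non-canonical forests of order at least $9$, directly in this $35$-vertex graph. Given an induced $P_4$ $A$-$B$-$C$-$D$ we have $F\subseteq N(A,B)\cup N(C,D)\cup\{B,C\}$ with $N(A,B)\cong N(C,D)\cong\mathcal{P'}(9)$; I would then record exactly which $2$-spaces lie in $N(A,B)\cap N(C,D)$ (the $2$-spaces meeting all of $A,B,C,D$) and how $B$ and $C$ can attach to these two copies of $\mathcal{P'}(9)$, and carry out a short finite case analysis --- of the same flavour as the one that, in $\otimes^2 K_4$, forces the analogous candidate configuration to be a $6$-cycle rather than a forest --- to show that no induced forest among these vertices has more than $8$ vertices; the $P_2+P_2$ subcase is treated the same way. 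This analysis is the place where the generic $\tau(\otimes^2 K_{q+1})$ bound fails to dominate the canonical value, and the unique place where the maximum forest is not canonical; if no clean argument presents itself, the graph is small enough to be verified by computer.
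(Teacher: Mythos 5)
Your proposal takes essentially the same route as the paper's proof: you identify the non-neighbourhood $N(A,B)$ of an edge as $\otimes^{2}K_{q+1}$ (using the same observation that a $2$-space meeting both $A$ and $B$ is determined by its lines in $A$ and in $B$, and two such spaces are disjoint exactly when both of these lines differ), apply Lemma~\ref{lem:count} to bound non-canonical forests by $2\tau(\otimes^{2}K_{q+1})+2$, split into the cases $q\ge 3$ and $q=2$, and defer $K_{2}(4,2)$ to a finite check. The paper cites Theorem~\ref{TensorPsrg} uniformly for $q\ge 3$ and uses the algebraic bound plus an explicit $P_5$ to get $\tau(\otimes^{2}K_{3})=5$, whereas you invoke Theorem~\ref{thm:examples}(4) for $q\ge 4$, Theorem~\ref{TensorPsrg} for $q=3$, and the identification $\otimes^{2}K_{3}\cong\mathcal{P}'(9)$ with Example~\ref{exampleP9} for $q=2$; these are cosmetic variations of the same calculation. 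One small caution: your sketched construction of a non-canonical order-$8$ forest in $K_{2}(4,2)$ (the six coordinate $2$-spaces plus two extra $2$-spaces sharing a line) is not automatic --- not every choice of the two extra subspaces works, e.g.\ taking both through $\langle e_{1}+e_{2}+e_{3}+e_{4}\rangle$ creates a $5$-cycle --- though suitable choices do exist (for instance $U=\langle e_{1},e_{2}+e_{3}\rangle$, $V=\langle e_{1},e_{2}+e_{4}\rangle$); since both you and the paper ultimately fall back on a computer check for $K_{2}(4,2)$, this is not a substantive gap.
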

\begin{proof}
Let $F$ be a non-canonical forest in $K_{q}(n,2)$. Then $F$ contains either a copy of $P_{4}$ (a path with $4$ vertices) or a copy of $P_{2}+P_{2}$ (the disjoint union of two edges) as an induced subgraph. 

First assume that $F$ has four vertices $\{X,Y,V,W\}$ inducing a path, with $X\sim Y$, $Y \sim V$, and $V \sim W$. From the discussion prior to Lemma~\ref{lem:count}, we have $F \subset N(X,Y)  \cup N(V,W) \cup \{Y,V\}$. As $K_{q}(n,2)$ is an strongly-regular graph, the graph induced by $N(X,Y)$ is isomorphic to the graph induced by $N(V,W)$. We now have $|F| \leq 2+ 2\tau(N(V,W))$, where $\tau(N(V,W))$ is the order of a maximum forest induced in the graph $N(V,W)$. Similarly, if $F$ has four vertices $\{X,Y,V,W\}$ inducing a disjoint union of two edges, with $X\sim Y$ and $V \sim W$, then $|F| \leq 2\tau(N(V,W))$. Therefore the order of a non-canonical forest is bounded above by $2+ 2\tau(N(V,W))$. We will now try to look at the structure of $N(V,W)$.

As $V$ and $W$ are adjacent, $V$ and $W$ are disjoint $2$-subpaces of $\bb{F}_{q}^{4}$, and thus any $U\in N(V,W)$ is completely determined by $U \cap V$ and $U \cap W$. Let $U_{1},U_{2} \in N(V,W)$ intersect non-trivially. Let $e_{1},e_{2} \in V$ and $f_{1},f_{2}\in W$ be such that $e_{i}\in U_{i}\cap V$ and $f_{i}\in U_{i} \cap W$. As $U_{1}$ and $U_{2}$ intersect non-trivially, there are $a,b,c,d \in \bb{F}_{q}$ such that $ae_{1}+bf_{1}=ce_{2}+df_{2}$. This can be rewritten as $ae_{1}-ce_{2}=df_{2}-bf_{1}$. As $V$ and $W$ are disjoint, we must have $ae_{1}=ce_{2}$ and $df_{2}=bf_{1}$. We have now concluded that for $U_{1}, U_{2} \in N(V,W)$, $U_{1} \sim U_{2}$ if and only if $U_{1}\cap V \neq U_{2} \cap V$ and $U_{1}\cap W \neq U_{2} \cap W$. This shows that the subgraph induced by $N(V,W)$ is the two fold tensor product of the complete graph $K_{q+1}$. We dealt with these graphs in Subsection~\ref{SubSec:TP}. Using the notation in Subsection~\ref{SubSec:TP}, we have $N(V,W) \cong X_{2,q+1}$.

In Theorem~\ref{TensorP} we will show that provided $q\geq3$, we have $\tau(X_{2,q+1})=q+2$. Therefore if $q\geq 3$, the order of a non-canonical forest is bounded above by $2+2\tau(N(V,W))=2q+6$. The order of the largest canonical forest is $\alpha(K_{q}(n,2))+1 ={n-1 \choose 1}_{q}+1$. Elementary algebra shows that ${n-1 \choose 1}_{q}+1>2q+6$ for all $n\geq 4$ and $q\geq 3$. Therefore provided $q\geq 3$, every maximum forest in $K_{q}(n,2)$ is canonical.

We now shift our attention to $q=2$. If $V,W$ are two adjacent vertices in $K_{2}(n,2)$, then we have seen that $N(V,W) \cong X_{2,3}= K_{3} \otimes K_{3}$. The algebraic bound Theorem~\ref{algbound} shows that $\tau(X_{2,3})< 6$. We can check either by hand or computer that $X_{2,3}$ has an induced path with $5$ vertices, and therefore $\tau(X_{2,3})=5$. Thus the order of a non-canonical forest $F$ is bounded above by $2+2\tau(N(V,W))=12$. For $n>4$, we have $\alpha(K_{2}(n,2))+1=2^{n-1}> 12$. Therefore provided $n>4$, every maximum forest in $K_{2}(n,2)$ is canonical.
 
We are now left with the case of $K_{2}(4,2)$. With the help of a computer algebra system such as Sage (\cite{sage}), we can show that $\tau(K_{2}(4,2))=\alpha(K_{2}(4,2))+1=8$. It can also be shown that there are paths on $8$ vertices in $K_{2}(4,2)$. Thus not all maximum forests are canonical in this case.

\end{proof}

\subsection{Non-collinearity Graphs of Generalized Quadrangles}

The next family we consider is the family of non-collinearity graphs on generalized quadrangles. Let $\mathcal{G}$ be a generalized quadrangle with parameters $s, t$. By $X_{\mathcal{G}}$, we denote the graph whose vertices are the points of $\mathcal{G}$, in which two points are adjacent if and only if they are not collinear. It is well known that $X_{G}$ is a strongly-regular graph with  $\left\{s^2t,\ -s,\ t \right\}$ as the set of distinct eigenvalues (see ~\cite[Section 5.6]{GMbook}). By the Delsarte-Hoffman ratio bound for cocliques (Theorem~\ref{ratiobound}), 
\[
\alpha(X_{G}) \leq \frac{(s+1)(st+1)s}{s^{2}t+s}=s+1.
\]
The set of all points on a line form a coclique, so this bound is tight.
We obtain the following characterization of maximum induced forests in $X_{\mathcal{G}}$.

\begin{theorem}\label{GQ}
Let $\mathcal{G}$ be a generalized quadrangle with parameters $(s, t)$ and let $X_{\mathcal{G}}$ be the non-collinearity graph on points in $\mathcal{G}$. Suppose that $s>3$, then,
\[
\tau(X_{\mathcal{G}})=s+2.
\]
Moreover, every maximum induced forest in $X_{\mathcal{G}}$ is canonical. 
\end{theorem}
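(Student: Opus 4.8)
The plan is to apply Lemma~\ref{lem:count} together with the strongly-regular structure of $X_{\mathcal G}$, exactly as in the proofs of Theorems~\ref{thm:Kneserk2} and~\ref{cor:qKneserk2}. First I would record that a canonical forest in $X_{\mathcal G}$ has order $\alpha(X_{\mathcal G})+1 = s+2$, so it suffices to show that every non-canonical induced forest has strictly fewer than $s+2$ vertices. Since $X_{\mathcal G}$ is strongly-regular, for any adjacent pair $(\gamma,\delta)$ the quantity $\eta(\gamma,\delta)$ is constant; I would compute it by inclusion–exclusion from the parameters of $X_{\mathcal G}$. Recall that $X_{\mathcal G}$ has $v=(s+1)(st+1)$ vertices and valency $k=s^2t$, and from the eigenvalues $\{s^2t,-s,t\}$ one recovers the strongly-regular parameters; in particular two adjacent (i.e.\ non-collinear) points have $a = s^2t - s t - ?$ common neighbours and two non-adjacent (collinear) points have $c$ common neighbours, and $\eta(\gamma,\delta) = v - 2k + a$. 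Plugging in, $\eta(X_{\mathcal G})$ should come out to a polynomial in $s,t$ of order roughly $s t$, much smaller than $s$ — wait, that is not small. Let me reconsider: the point is that $2 + 2\eta(X_{\mathcal G})$ must be compared with $s+2$, and since $\eta$ is on the order of $st$ this naive comparison fails, so Lemma~\ref{lem:count} alone is \emph{not} enough.

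Instead, as in the proof of Theorem~\ref{cor:qKneserk2}, I would follow the sharper route: if $F$ contains an induced $P_4$ on $\{a,b,c,d\}$ with $a\sim b\sim c\sim d$, then $F\subseteq N(a,b)\cup N(c,d)\cup\{b,c\}$, and by strong regularity $N(a,b)$ and $N(c,d)$ each induce a copy of the \emph{same} graph, so $|F|\le 2 + 2\,\tau(H)$ where $H$ is the graph induced on the common non-neighbourhood of an adjacent pair. The crucial step is therefore to identify $H$. In a generalized quadrangle, if $p,q$ are two non-collinear points, the set of points collinear with neither $p$ nor $q$ has a clean description via the GQ axioms: every point off the ``grid'' determined by $p$, $q$ and their common neighbours behaves in a structured way. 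I expect $H$ to be (isomorphic to) the non-collinearity graph of a smaller generalized quadrangle, or a blow-up/tensor-type graph, whose acyclic number can be bounded by the algebraic bound Theorem~\ref{algbound} or by induction. Then $|F|\le 2 + 2\tau(H)$ and one checks $2+2\tau(H) < s+2$, i.e.\ $\tau(H) < s/2$, using $s>3$.

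The main obstacle is pinning down the structure of $H$, the graph induced on $N(a,b)$ for an adjacent pair $a\sim b$ in $X_{\mathcal G}$, and getting a good enough upper bound on $\tau(H)$. Concretely: given non-collinear points $a,b$ in the GQ $\mathcal G$, let $W$ be the set of points collinear with neither. By the GQ axiom, each of the $t+1$ points collinear with both $a$ and $b$ (these exist since in a GQ any two non-collinear points have exactly $t+1$ common neighbours — here ``collinear'' is non-adjacency in $X_{\mathcal G}$, so I must be careful with the translation) partitions things, and $W$ should decompose according to which line through each such point a given point of $W$ lies on. I anticipate that $|W| = s^2 t - \text{(lower order)}$ and that $W$ with the collinearity structure is again ``GQ-like,'' so that the algebraic bound gives $\tau(H) = O(\sqrt{|W|})$, which is $O(s\sqrt t)$ — still possibly too big. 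If that is the case, the counting must be pushed further: one uses the $P_4$ more aggressively, noting that a fifth vertex $e\in F$ adjacent to (say) $c$ forces new non-adjacencies, iterating the argument on $N(c,d)$ and $N(d,e)$. The cleanest outcome would be to show directly that an induced forest in $X_{\mathcal G}$ containing a $P_4$ or a $P_2+P_2$ is confined to a single line plus boundedly many extra points; then $|F| \le (s+1) + O(1)$, and one checks the constant is $\le 1$ when $s>3$. I would structure the final write-up around that dichotomy ($P_4$ case / $P_2+P_2$ case), handling each by the substructure-of-$\mathcal G$ argument, and conclude $\tau(X_{\mathcal G}) = s+2$ with equality forced only by canonical forests.
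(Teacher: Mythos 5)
Your proposal correctly diagnoses that Lemma~\ref{lem:count} by itself is not strong enough here, but it then heads off in the wrong direction, and the two quantities you estimate on the way are both wrong by orders of magnitude. For two adjacent vertices $a\sim b$ in $X_{\mathcal G}$ (i.e.\ two non-collinear points), the set $N(a,b)$ of common non-neighbours is exactly the set of points collinear with both $a$ and $b$, which in a generalized quadrangle is the \emph{trace} of $\{a,b\}$: it has precisely $t+1$ points, not $\Theta(st)$ and certainly not $\Theta(s^2t)$. Moreover, the GQ axiom forces any two of those $t+1$ points to be non-collinear (a line through two of them would be met twice by $a$ or $b$), so $N(a,b)$ induces a \emph{clique} $K_{t+1}$ in $X_{\mathcal G}$, not a ``smaller GQ'' or a tensor-power graph. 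If you carry your own scheme through with the correct $H\cong K_{t+1}$, you get $\tau(H)=2$ and hence $|F|\le 2+2\tau(H)=6$ for a forest containing an induced $P_4$. That only beats the canonical bound $s+2$ when $s\ge 5$; it is \emph{not} sufficient to prove the theorem at $s=4$, which is included in ``$s>3$''. So even the repaired version of your route falls one short.

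The paper's proof does not pass through $N(a,b)$ at all; it works directly with the GQ axioms. Given an induced $P_4$ on $\{A,B,C,D\}$ (with $A\sim B\sim C\sim D$ in $X_{\mathcal G}$), any further vertex $V$ of the forest must be collinear with at least three of these four points. One first rules out $V$ being collinear with all of $\{A,C,D\}$ or all of $\{A,B,D\}$ because that would force $V$ to lie on two distinct lines through $A$ (or $D$), making $V=A$ (or $D$). Thus $V$ must be adjacent to one of the two leaves, and then collinearity with the other three points, together with the ``unique collinear point on a line'' axiom applied to the lines $\overrightarrow{BD}$ and $\overrightarrow{AC}$, pins $V$ down to one of exactly two points $Q_1,Q_2$. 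The triangle-free property of a GQ shows $Q_1$ and $Q_2$ are non-collinear, so $\{Q_1,A,B,C,D,Q_2\}$ induces a $6$-cycle and any forest inside it has at most $5$ vertices. The $P_2+P_2$ case is reduced to the $P_4$ case by finding, for any fifth vertex $V$, an induced $P_4$ inside the forest. The decisive improvement over your count is precisely this last structural step: not merely that the forest lives inside a set of size $6$, but that this $6$-set induces a cycle, which shaves the bound from $6$ down to $5$ and is exactly what is needed to handle $s=4$. Your closing guess that the forest is ``confined to a single line plus boundedly many points'' is also not what happens --- the confining set is the $C_6$ described above, not a line.
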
 
\begin{proof}

Consider an induced forest $F$ which contains a path $\mathcal{P}$ on $4$ vertices as an induced subgraph. Let $\{A,\ B,\ C,\ D\}$ be the vertices inducing $\mathcal{P}$, with $A \sim B$, $B \sim C$, and  $C \sim D$.
As $F$ is a forest, any $V \in F \setminus \mathcal{P}$ must be non-adjacent to at least three vertices in $\{A,\ B,\ C,\ D\}$. Suppose $V$ is non-adjacent to each vertex in $\{A,C,D\}$. In other words, $V$ is collinear with every point in $\{A,C,D\}$. Thus $V$ must lie on both the lines $\overrightarrow{AC}$ and $\overrightarrow{AD}$. This implies that $V=A$, which is contrary to our assumption $V \in F \setminus \mathcal{P}$. By the same argument, $V$ cannot be simultaneously non-adjacent to every vertex in $\{A,B,D\}$. Thus $V$ must be adjacent to one of $A$ or $D$. If $V$  is adjacent to $A$, then as $F$ is a forest, $V$ must be collinear to every point in $\{B,C,D\}$. Similarly, if $V$ is adjacent to $D$, then $V$ must be collinear to every point in $\{A,B,C\}$. As $\mathcal{G}$ is a generalized quadrangle, given a line $L$ and a point $P$ not on $L$, there is a unique point on $L$, that is collinear with $P$. Let $Q_{1}$ be the unique point on $\overrightarrow{BD}$ that is collinear with $C$, and let $Q_{2}$ be the unique point on $\overrightarrow{AC}$ that is collinear with $B$. We can now conclude that $F \subseteq \{Q_{1},\ A,\ B,\ C,\ D,\ Q_{2}\}$. We now claim that $Q_{1}$ and $Q_{2}$ are non-collinear. Let us assume the contrary, then we see that $\{Q_{1},\ C,\ Q_{2}\}$ form a triangle (not in the graph) in $\mathcal{G}$. This is impossible as a generalized quadrangle cannot contain a triangle, and therefore $Q_{1}$ and $Q_{2}$ are not collinear. Thus $S:=\{Q_{1},\ A,\ B,\ C,\ D,\ Q_{2}\}$ induces a cycle on $6$ vertices. As $F \subset \{Q_{1},\ A,\ B,\ C,\ D,\ Q_{2}\}$ is a forest, we must have $|F|\leq 5$.

Now consider an induced forest $F$ that contains a copy of $P_2 + P_2$. Let $\{P, Q\}$ and $\{R, S\}$ be two edges in distinct connected components of the forest. The points $P, R, Q, S$ form vertices of a quadrilateral in $\mathcal{G}$. Suppose that $|F|>4$, then any $V \in F \setminus \{P,\ R,\ Q,\ S\}$ must be non-adjacent to at least one point in both $\{P,\ Q\}$ and $\{R,\ S\}$. Without loss of generality, let $V$ be non-adjacent with $R$ and $Q$. We claim that $V$ must be on the line $\overrightarrow{RQ}$. Assuming the contrary implies the existence of the triangle $VRQ$ in $\mathcal{G}$, which is absurd as $\mathcal{G}$ is a generalized quadrangle. Again since  $\mathcal{G}$ is a generalized quadrangle, $R$ is the unique point on $\overrightarrow{RQ}$ collinear with $P$; and $Q$ is the unique point on $\overrightarrow{RQ}$ collinear with $S$. Therefore $V\in \overrightarrow{RQ}$, must be simultaneously non-collinear with both $P$ and $S$. Now the set $\{R,P,V,S\}$ induces a path on four vertices in $F$. By the argument in the above paragraph, existence of such a path implies that $|F|\leq 5$.   

From the previous two paragraphs, we can conclude that the size of a non-canonical forest is at most $5$. Since when $s>3$ any canonical forest has $s+2 \ge 6$ vertices, we have shown that if $s>3$, the only maximum forests in $X_{\mathcal{G}}$ are the canonical ones.
\end{proof}

\subsection{Tensor powers of complete graphs}\label{SubSec:TP}

We next consider a family of graphs in the Hamming scheme. Consider the complete graph on $n$ vertices, $K_{n}$. By $X_{m, n}$, we denote the $m$-fold tensor product $\otimes^{m}K_{n}$. This is the $m$th graph in the Hamming Scheme $H(m,n)$. The vertex set can be considered as sequences of length $m$ with entries from the additive group $\bb{Z}_{n}$, with two sequences adjacent if and only if they differ at every coordinate. This is an $(n-1)^{m}$-regular graph whose smallest eigenvalue is $-(n-1)^{m-1}$. Application of the Delsarte-Hoffman ratio bound (Theorem~\ref{ratiobound}) shows that $\alpha(X_{m,n})\leq n^{m-1}$. This bound is met by the subset of sequences whose first coordinate is $0$.

If $m=1$, then $X_{m,n} = K_n$ and any maximum forest is an edge which is a canonical maximum forest. 
Also, if $n=1$ then $X_{m,n}$ is simply $K_1$, so trivially any maximum forest is canonical.

We obtain the following characterization of maximum induced forests in $X_{m, n}$.

\begin{theorem}\label{TensorP}
Let $m, n$ be positive integers with $m \geq 2$ and $n>2m(m-1)$. Then
\[
\tau(X_{m,n})=n^{m-1}+1,
\]
and every maximum induced forest in $X_{m, n}$ is canonical. 
\end{theorem}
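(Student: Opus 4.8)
The strategy follows the same template as the earlier results: since a canonical induced forest in $X_{m,n}$ has order $\alpha(X_{m,n})+1 = n^{m-1}+1$, it suffices by Lemma~\ref{lem:count} to show that every non-canonical induced forest is strictly smaller, i.e. that $2 + 2\eta(X_{m,n}) < n^{m-1}+1$ whenever $n > 2m(m-1)$. So the real work is to bound $\eta(X_{m,n})$, the maximum over adjacent pairs $(a,b)$ of the number of vertices nonadjacent to both $a$ and $b$. Because $X_{m,n}$ is vertex- and indeed arc-transitive, $\eta(a,b)$ is the same for every edge, so I may fix a convenient pair, say $a = (0,0,\dots,0)$ and $b = (1,1,\dots,1)$ (any sequence differing from $a$ in every coordinate).

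The plan is to count, or rather upper bound, the vertices $v$ that are \emph{adjacent} to $a$ \emph{or} to $b$, and subtract; equivalently to count directly the vertices nonadjacent to both. A vertex $v$ is nonadjacent to $a$ iff it agrees with $a$ in at least one coordinate, i.e. $v_i = 0$ for some $i$; similarly $v$ is nonadjacent to $b$ iff $v_j = 1$ for some $j$. I would count these by inclusion–exclusion over which coordinates are "pinned" to $0$ and which to $1$: the number of $v$ with a prescribed nonempty set $I$ of coordinates equal to $0$ and a prescribed nonempty set $J$ (disjoint from $I$, since $0 \neq 1$) of coordinates equal to $1$ is $n^{m - |I| - |J|}$. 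Summing with inclusion–exclusion signs, $\eta(X_{m,n})$ equals $\sum_{i,j \ge 1} (-1)^{i+j+ \dots} \binom{m}{i}\binom{m-i}{j} n^{m-i-j}$; the dominant term is the one with $i = j = 1$, giving roughly $m^2 n^{m-2}$, and all the lower-order terms are controlled by powers of $n$ that are smaller by at least a factor of $n$. So the clean bound to aim for is $\eta(X_{m,n}) \le m^2 n^{m-2}$, or something of that order — in fact the precise value is $n^m - 2(n-1)^m + (n-2)^m$ by a direct three-term inclusion–exclusion on "agrees with $a$ somewhere" and "agrees with $b$ somewhere," which is the cleanest route and probably what I would actually write.

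With $\eta(X_{m,n}) = n^m - 2(n-1)^m + (n-2)^m$ in hand, the final step is elementary algebra: show $2 + 2\big(n^m - 2(n-1)^m + (n-2)^m\big) < n^{m-1}+1$ for $n > 2m(m-1)$ and $m \ge 2$. Expanding $(n-1)^m$ and $(n-2)^m$ by the binomial theorem, the $n^m$ and $n^{m-1}$ terms cancel in $n^m - 2(n-1)^m + (n-2)^m$ (the expression is a polynomial in $n$ of degree $m-2$ with leading coefficient $\binom{m}{2}(2^2 - 2) = 2\binom{m}{2} = m(m-1)$), so the left side is roughly $2m(m-1) n^{m-2}$ plus lower-order terms, and one checks this is below $n^{m-1}$ precisely in the regime $n > 2m(m-1)$. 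I would present this as: the leading term of $n^{m-1} - 2\eta$ is $n^{m-1} - 2m(m-1)n^{m-2} = n^{m-2}(n - 2m(m-1)) > 0$, and then argue the remaining lower-order terms don't spoil the inequality, perhaps by a slightly more careful estimate or by noting the alternating-sign structure of the binomial expansion lets one bound $\eta$ above by $\binom{m}{2}2^2 n^{m-2} = 2m(m-1)n^{m-2}$ cleanly.

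**Main obstacle.** The only genuinely delicate point is making the final polynomial inequality airtight across \emph{all} $m \ge 2$ and $n > 2m(m-1)$ simultaneously, rather than just asymptotically in $n$ — the lower-order terms in the binomial expansions have mixed signs and one must be sure the chosen threshold $2m(m-1)$ is actually sufficient and not merely asymptotically sufficient. I expect the slickest fix is to obtain a clean one-sided bound $\eta(X_{m,n}) \le 2m(m-1)\, n^{m-2}$ valid for all relevant $m,n$ (by truncating the inclusion–exclusion after the first positive term, or by a direct over-counting argument: choose the repeated-symbol coordinate and its value in at most $\binom{m}{1}\cdot 2$ ways for each of $a,b$ — actually $m \cdot m$ ways total for an ordered pair of such coordinates — times $n^{m-2}$ free coordinates), after which $2 + 2\eta \le 2 + 4m(m-1)n^{m-2}$ and the comparison with $n^{m-1}+1$ is immediate once $n > 2m(m-1)$. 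Everything else — vertex-transitivity to fix the edge, the inclusion–exclusion count, invoking Lemma~\ref{lem:count} — is routine.
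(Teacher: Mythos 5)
Your overall approach is exactly the paper's: fix an edge by transitivity, bound $\eta(X_{m,n})$, and feed it into Lemma~\ref{lem:count}. Your exact inclusion--exclusion value $\eta = n^m - 2(n-1)^m + (n-2)^m$ is correct and is in fact more precise than what the paper records (the paper asserts $\eta = m(m-1)n^{m-2}$, which is really an over-count, not an equality, once $m \geq 3$). So the skeleton is fine.

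The genuine problem is the constant in your ``clean'' bound. You propose $\eta \le 2m(m-1)n^{m-2}$ (and at one point suggest $m \cdot m = m^2$ for the number of ordered pairs of pinned coordinates), and then claim $2 + 2\eta < n^{m-1}+1$ follows ``once $n > 2m(m-1)$.'' It does not: $2 + 4m(m-1)n^{m-2} < n^{m-1}+1$ forces $n > 4m(m-1)$, roughly twice the stated threshold, and $m^2$ in place of $m(m-1)$ similarly inflates the threshold to about $2m^2$. The sharp over-count, which is what the paper uses and what makes the arithmetic close with $n > 2m(m-1)$, is $\eta \le m(m-1)n^{m-2}$: a vertex $v$ nonadjacent to both $a$ and $b$ must agree with $a$ in some coordinate $i$ ($m$ choices) and with $b$ in some coordinate $j$, and since $a$ and $b$ disagree in \emph{every} coordinate you automatically have $j \neq i$ ($m-1$ choices, not $m$, and there is no extra factor of $2$). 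The remaining $m-2$ coordinates are free, giving $m(m-1)n^{m-2}$. With that bound, $2 + 2m(m-1)n^{m-2} < n^{m-1}+1$ rearranges to $1 < n^{m-2}\bigl(n - 2m(m-1)\bigr)$, which is exactly what $n > 2m(m-1)$ and $m \ge 2$ deliver, and your ``mixed lower-order terms'' worry evaporates because you never expand the binomials at all. In short: the missing idea is the forced distinctness $i \ne j$, which tightens $m^2$ (or your $2m(m-1)$) down to $m(m-1)$; without it you would only prove the theorem under a strictly stronger hypothesis than the one stated.
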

\begin{proof}
As before, we investigate the orders of non-canonical forests. 
A simple counting argument shows that  $\eta = m(m-1)n^{m-2}$ and therefore by Lemma~\ref{lem:count} a non-canonical forest has order at most $2 + 2 m(m-1)n^{m-2}$.
Thus canonical forests are the largest, provided that
\[
2 + 2 m(m-1)n^{m-2} < n^{m-1}+1,
\]
or, equivalently, 
\[
1 <n^{m-2} ( n  -    2 m(m-1) ) . 
\]
If $m \geq 2$, then the above equation holds whenever $n > 2m(m-1)$.
\end{proof}

As in the case of the $q$-Kneser graphs and the Kneser graphs, we consider the special case of strongly regular tensor powers of complete graphs, in which the same sort of counting can be done more precisely. In particular, the following result gives a complete characterization of maximum forests in $X_{2,n}$  provided $n\geq 4$.

\begin{theorem}\label{TensorPsrg}
Given $n\geq 3$, we have $\tau(X_{2,n})=max(\{5,\ n+1\})$. Moreover when $n\geq 4$, every maximum induced forest is canonical.
\end{theorem}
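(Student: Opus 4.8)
The plan is to follow the template of Theorems~\ref{thm:Kneserk2} and~\ref{cor:qKneserk2}: show that every \emph{non-canonical} induced forest in $X_{2,n}$ has at most five vertices while five is attained, then combine with $\alpha(X_{2,n})=n$ to get $\tau(X_{2,n})=\max\{5,\,n+1\}$ and deduce canonicity in the range where $n+1$ exceeds $5$. The lower bound needs two explicit forests: joining a single vertex to the coclique of all pairs with first coordinate $0$ gives a canonical forest on $\alpha(X_{2,n})+1=n+1$ vertices, and the five vertices $(1,1),(2,2),(3,1),(1,2),(2,1)$ induce a path $P_5$ (this uses only $n\ge3$; for $n=3$ one may instead cite Example~\ref{exampleP9}, since $X_{2,3}$ is the self-complementary Paley graph on nine vertices). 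Hence $\tau(X_{2,n})\ge\max\{5,\,n+1\}$.

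For the upper bound I would first record the local structure of $X_{2,n}$: if $u=(u_1,u_2)$ and $v=(v_1,v_2)$ are adjacent, the only two vertices adjacent to neither $u$ nor $v$ are the corners $(u_1,v_2)$ and $(v_1,u_2)$, and these are distinct, so $\eta(u,v)=2$ for every edge and thus $\eta(X_{2,n})=2$; by Lemma~\ref{lem:count} this already gives $|F|\le6$ for a non-canonical forest $F$. Recall $F$ non-canonical means $F$ contains an induced $P_4$ or an induced $P_2+P_2$. If $F$ has a $P_2+P_2$ on $\{a,b,c,d\}$ with $a\sim b$ and $c\sim d$ but no induced $P_4$, then by the argument in the proof of Lemma~\ref{lem:count} every other vertex of $F$ lies in $N(a,b)\cup N(c,d)$; since $c,d\in N(a,b)$, $a,b\in N(c,d)$, and each of these two sets has only two elements, we get $N(a,b)=\{c,d\}$, $N(c,d)=\{a,b\}$, forcing $F=\{a,b,c,d\}$ and $|F|=4$ (in particular $X_{2,n}$ contains no induced $3K_2$).

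If instead $F$ contains an induced path $a\sim b\sim c\sim d$, the same reasoning gives $F\subseteq\{b,c\}\cup N(a,b)\cup N(c,d)$, a set of at most six vertices (using $d\in N(a,b)$ and $a\in N(c,d)$). The crux is to show this six-element set is never acyclic. Writing $N(a,b)=\{(a_1,b_2),(b_1,a_2)\}$, the vertex $d$ is one of these corners; splitting into the two symmetric subcases and feeding in the induced-path conditions, a short coordinate computation identifies $\{a,b,c,d\}\cup N(a,b)\cup N(c,d)$ as a full $2\times3$ (respectively $3\times2$) subgrid $\{p_1,p_2\}\times\{q_1,q_2,q_3\}$ with distinct $p_i$ and distinct $q_j$; such a subgrid induces $K_2\otimes K_3\cong C_6$ in $X_{2,n}$, which contains a cycle, so $F$ omits one of its vertices and $|F|\le5$. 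Combining, every non-canonical forest has at most five vertices, and every canonical forest — a coclique plus one vertex — has at most $n+1$; hence $\tau(X_{2,n})\le\max\{5,\,n+1\}$, so $\tau(X_{2,n})=\max\{5,\,n+1\}$.

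For the canonicity statement, when $n\ge5$ one has $n+1>5$, so any forest of order $\tau(X_{2,n})=n+1$ is too large to be non-canonical and is therefore canonical. I expect the genuinely non-routine step to be the coordinate computation showing that the closure of an induced $P_4$ is forced to be a $6$-cycle — everything else is bookkeeping with Lemma~\ref{lem:count} and the identity $\eta(X_{2,n})=2$. I would also flag the boundary case $n=4$: there $\tau(X_{2,4})=\max\{5,5\}=5$ and the induced $P_5$ above is a non-canonical maximum forest, so the canonicity assertion is really valid for $n\ge5$, with $n=4$ exceptional in the same way that $(n,q)=(4,2)$ is exceptional in Theorem~\ref{cor:qKneserk2}; and for $n=3$, $\tau(X_{2,3})=5>n+1$ and every maximum forest is an induced $P_5$, hence non-canonical.
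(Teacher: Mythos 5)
Your argument is essentially the paper's: both compute $\eta(X_{2,n})=2$, both show via coordinates that the six-element set $\{B,C\}\cup N(A,B)\cup N(C,D)$ around an induced $P_4$ is a $3\times 2$ (or $2\times 3$) subgrid inducing $C_6$, hence a forest containing a $P_4$ has at most five vertices, and both dispose of the $P_2+P_2$ case as a corollary. Your treatment of $P_2+P_2$ is in fact cleaner than the paper's: you observe directly that if $a\sim b$ and $c\sim d$ with no edges between, then $\{c,d\}\subseteq N(a,b)$ and $\{a,b\}\subseteq N(c,d)$, so $|N(a,b)|=|N(c,d)|=2$ forces $F=\{a,b,c,d\}$ when there is no induced $P_4$; the paper only gestures at this by ``adding a vertex that induces the same $P_5$.''

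More importantly, you have correctly flagged a genuine error in the paper's statement. For $n=4$ we have $\tau(X_{2,4})=\max\{5,\,5\}=5$, and the explicit induced path on $(1,1),(2,2),(3,1),(1,2),(2,1)$ lies inside $X_{2,4}$ and has five vertices, so it is a \emph{maximum} induced forest; but no vertex of a $P_5$ can be removed to leave a coclique (removal gives $P_4$, $P_3+P_1$, or $P_2+P_2$), so it is not canonical. The paper's proof only ever shows non-canonical forests have order $\le 5$ and canonical ones have order $\le n+1$; this yields canonicity of maximum forests only when $n+1>5$, i.e.\ $n\ge 5$. The clause ``when $n\geq 4$, every maximum induced forest is canonical'' should read $n\geq 5$, with $n=4$ exceptional in exactly the same way that $(n,q)=(4,2)$ is exceptional in Theorem~\ref{cor:qKneserk2}. (Note also that the corrected statement is what the proof of Theorem~\ref{cor:qKneserk2} actually uses: there one only needs $\tau(X_{2,q+1})=q+2$, not canonicity, for $q\ge 3$.) Your proof is otherwise complete and correct.
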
  
\begin{proof}
Firstly given an edge $\{A,B\}$, we observe that $|N(A,B)|=2$, where $N(A,B)$ is the set of vertices that are not adjacent to either $A$ or $B$. Suppose that $F$ is a forest, with an induced subgraph $P \cong P_{4}$. Suppose that $P$ is made up of vertices $\{A,B,C,D\}$ with $A\sim B$, $B \sim C$ and $C \sim D$. Then from the discussion prior to Lemma~\ref{lem:count}, we know that $F \subset N(A,B) \cup N(C,D)\cup \{B,C\}$. Suppose that $X$ and $Y$ are vertices such that $N(A,B)=\{D,X\}$ and $N(C,D)=\{A,Y\}$. Without loss of generality, we may assume that $A=(a,b)$, $B=(c,d)$, $D=(a,d)$, and $C=(e,b)$, for some $a,b,c,d,e \in \bb{Z}_{n}$ (not necessarily distinct) such that $a\neq c$, $b\neq d$, $a \neq e$, and $e\neq c$. This forces $X=(c,b)$ and $Y=(e,d)$. Therefore $X$ is adjecent to $Y$, and thus $\subset N(A,B) \cup N(C,D)\cup \{B,C\}$ is a $6$ cycle. Therefore $|F| \leq 5$.

If $G$ is a forest with an induced copy of $P_{2}+P_{2}$, then a similar argument shows that $|G| \leq 5$ (by adding a vertex that induces the same $P_5$ that arises if we start with a $P_4$ as above). This completes the proof. 
\end{proof}

\subsection{Orthogonal Array Graphs}

We finally consider a family of strongly-regular graphs associated with orthogonal arrays. Let $m$ and $n$ be positive integers with $m<n+1$. An orthogonal array with parameters $(m, n)$ is an $m \times n^{2}$ array with entries in $\bb{Z}_n$ with the property that every $2\times n^{2}$ array consists of all $n^{2}$ possible pairs. Given an orthogonal array $\mathcal{O}$ with parameters $(m, n)$, by $X_{\mathcal{O}}$, we denote the graph on columns of $\mathcal{O}$, where two columns are adjacent if and only if there are no rows in which they have the same entry. We note that the graph $X_{\mathcal{O}}$ is the complement of the block graph of the orthogonal array $\mathcal{O}$. It is well known, see for example~\cite[Theorem 5.5.1]{GMbook}, that this is a strongly-regular graph with valency  $m(n-1)$ and least eigenvalue $m-n-1$. Application of the Delsarte-Hoffman ratio bound (Theorem~\ref{ratiobound}) shows that $\alpha(X_{\mathcal{O}})\leq n$. This bound is met by the set of columns of $\mathcal{O}$ whose first entry is $1$.

\begin{theorem}\label{OA}
Let $m, n$ be positive integers with $n>1+2m(m-1)$ and let $\mathcal{O}$ be an orthogonal array with parameters $(m, n)$. Then
\[
\tau(X_{\mathcal{O}})=n+1.
\]
Moreover, every maximum induced forest in $X_{m, n}$ is canonical. 
\end{theorem}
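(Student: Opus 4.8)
The plan is to mirror the arguments used for the previous families in this section. Recall from the discussion preceding the theorem that $\alpha(X_{\mathcal{O}}) = n$, so a canonical induced forest has order exactly $n+1$, and that any induced forest of order $\geq n+1$ which is not canonical contains an induced $P_4$ or $P_2 + P_2$. Hence, by Lemma~\ref{lem:count}, every non-canonical induced forest has order at most $2 + 2\,\eta(X_{\mathcal{O}})$, and the whole proof reduces to computing $\eta(X_{\mathcal{O}})$ and then invoking the hypothesis $n > 1 + 2m(m-1)$. (For the lower bound $\tau(X_{\mathcal{O}}) \geq n+1$, the $n$ columns of $\mathcal{O}$ with first entry $1$ form a coclique, and adjoining any further column produces a canonical forest of order $n+1$.)

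First I would record two elementary consequences of the strength-$2$, index-$1$ axiom, namely that every $2\times n^2$ subarray lists each pair in $\mathbb{Z}_n \times \mathbb{Z}_n$ exactly once: (i) two distinct columns of $\mathcal{O}$ agree in at most one row; and (ii) for any ordered pair $(i,j)$ of distinct rows and any target $(u,v) \in \mathbb{Z}_n \times \mathbb{Z}_n$, there is exactly one column of $\mathcal{O}$ whose entry in row $i$ is $u$ and whose entry in row $j$ is $v$.

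Next I would fix adjacent columns $\gamma, \delta$ of $X_{\mathcal{O}}$ (so they differ in every row) and describe $N(\gamma,\delta)$ exactly. A column $\omega \in N(\gamma,\delta)$ is non-adjacent to each of $\gamma,\delta$, hence agrees with $\gamma$ in at least one row and with $\delta$ in at least one row; by (i) these rows are unique, say row $i$ for $\gamma$ and row $j$ for $\delta$, and since $\gamma_i \neq \delta_i$ we get $i \neq j$. Conversely, for each ordered pair $(i,j)$ of distinct rows, (ii) produces a unique column $\omega^{(i,j)}$ with entry $\gamma_i$ in row $i$ and $\delta_j$ in row $j$; this column lies in $N(\gamma,\delta)$ (it agrees with $\gamma$ in row $i$ and with $\delta$ in row $j$), and one checks $\omega^{(i,j)} \notin \{\gamma,\delta\}$ since e.g. $\omega^{(i,j)} = \gamma$ would force $\delta_j = \gamma_j$. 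Using (i) once more, $\omega^{(i,j)}$ agrees with $\gamma$ precisely in row $i$ and with $\delta$ precisely in row $j$, so $(i,j)\mapsto \omega^{(i,j)}$ is a bijection between ordered pairs of distinct rows and $N(\gamma,\delta)$. Therefore $\eta(\gamma,\delta) = m(m-1)$ for every edge, whence $\eta(X_{\mathcal{O}}) = m(m-1)$.

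Finally I would combine the pieces: a non-canonical induced forest has order at most $2 + 2m(m-1)$, while $n > 1 + 2m(m-1)$ (equivalently $n \geq 2 + 2m(m-1)$) gives $2 + 2m(m-1) \leq n < n+1$; together with the trivial bound $|F| \leq \alpha(X_{\mathcal{O}}) + 1 = n+1$ for canonical forests and the lower bound above, this yields $\tau(X_{\mathcal{O}}) = n+1$ with every maximum forest canonical. I expect the only genuine work to be the $\eta$-count: one must be careful that the correspondence with ordered pairs of distinct rows really is a bijection — in particular that the constructed columns are distinct from $\gamma$ and $\delta$ and are not multiply counted — rather than settling for a one-way inequality; everything after that is a short arithmetic comparison.
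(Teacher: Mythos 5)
Your proposal is correct and follows essentially the same route as the paper: lower bound from a coclique plus one vertex, upper bound for non-canonical forests via Lemma~\ref{lem:count} with $\eta(X_{\mathcal{O}})=m(m-1)$, and the final arithmetic comparison under $n>1+2m(m-1)$. The only difference is cosmetic: you derive $\eta(X_{\mathcal{O}})=m(m-1)$ directly from the orthogonal-array axioms via the bijection with ordered pairs of distinct rows, whereas the paper reads it off as the strongly-regular parameter of the block graph cited from \cite[Theorem 5.5.1]{GMbook}.
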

\begin{proof}
We now apply Lemma~\ref{lem:count} to characterize the maximum independent sets in $X_{\mathcal{O}}$. We note that $\eta(X_{\mathcal{O}})$ is the number of common neighbours of two non-adjacent vertices in the complement of $X_{\mathcal{O}}$. By \cite[Theorem 5.5.1]{GMbook}, we see that $\eta(X_{\mathcal{O}})=m(m-1)$. By Lemma~\ref{lem:count}, if $F$ is a non-canonical induced forest, we have $|F|\leq 2+ 2m(m-1)$. 

We can now conclude that if $\alpha(X_{\mathcal{O}})+1 =n+1 > 2+ 2m(m-1)$, then every maximum induced forest is canonical. 
\end{proof}

\section{Kneser graphs with non-canonical maximum forests}\label{large}

As noted in Subsection~\ref{subsec:kneser}, $K(2k, k)$ is a forest, so all of these graphs have non-canonical maximum forests.
The logical next family of Kneser graphs to consider are the graphs $K(2k+1,k)$, these graphs also have non-canonical maximum forests.

\begin{lemma}\label{lem:2k+1forest}
If $k >3$, the graph $K(2k+1,k)$ has a forest of order
\[
\binom {2k}{k} + 2k-2.
\]
hence the maximum forests are not canonical.
\end{lemma}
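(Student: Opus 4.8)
The plan is to construct an explicit large induced forest in $K(2k+1,k)$ by taking a maximum coclique together with enough extra vertices to boost the order well past $\binom{2k}{k}+1$, while keeping the induced subgraph acyclic. Recall that a maximum coclique in $K(2k+1,k)$ is the \emph{canonical} (star) coclique $S_x = \{A : x \in A\}$ for a fixed point $x \in [2k+1]$, which has size $\binom{2k}{k}$. The key observation is that in $K(2k+1,k)$ the non-neighbours of a vertex $A$ are exactly the $k$-sets meeting $A$; since $n = 2k+1$, a $k$-set $B$ with $x \notin B$ is adjacent \emph{only} to the unique $k$-set $\overline{B}\setminus\{x\} = [2k+1]\setminus(B\cup\{x\})$ that avoids both $x$ and all of $B$. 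So I would first pin down this ``complementary-pair'' adjacency structure: among the $k$-sets \emph{not} containing $x$, two such sets are adjacent iff they are disjoint, and each such set has exactly one neighbour among the non-$x$-sets, namely its complement inside $[2k+1]\setminus\{x\}$, which is a $k$-set of $[2k]$ (identifying $[2k+1]\setminus\{x\}$ with $[2k]$). In other words, the subgraph induced on the $\binom{2k}{k}$ vertices avoiding $x$ is a perfect matching on those vertices.

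Next I would add vertices from the coclique $S_x$ carefully. A vertex $A \in S_x$ is non-adjacent to another $A' \in S_x$ always (both contain $x$), and $A$ is non-adjacent to a non-$x$-set $B$ iff $A \cap B \neq \emptyset$, i.e. $B \not\subseteq [2k+1]\setminus A$; since $|[2k+1]\setminus A| = k+1$, the number of non-$x$-sets $B$ adjacent to $A$ is the number of $k$-subsets of a fixed $(k+1)$-set, which is $k+1$. The construction I would propose: start with the matching $M$ on the $\binom{2k}{k}$ non-$x$-sets described above (this is already a forest of the right order $\binom{2k}{k}$), then delete one vertex from $2k-2$ chosen matching edges — no, rather, I would instead \emph{add} $2k-2$ vertices of $S_x$ to $M$ in such a way that each added vertex attaches to the matching without creating a cycle. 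Concretely, pick $A_1, \dots, A_{2k-2} \in S_x$ pairwise suitable so that each $A_i$ is adjacent to at most one vertex of $M \cup \{A_1,\dots,A_{i-1}\}$; since the $A_i$ all contain $x$ they form a coclique among themselves, so the only danger is an $A_i$ being adjacent to two or more vertices of $M$. Because each $A_i$ has exactly $k+1$ non-$x$-set neighbours, arranged as a sub-union of matching edges, the goal is to choose the $A_i$ so that each sees exactly one edge of $M$ and attaches to exactly one endpoint of it; a clean way is to let $A_i = \{x\} \cup C_i$ where $[2k]\setminus C_i$ is the union of a matched pair together with $\{x\}$-adjusted indices — I would work out the incidences in terms of complements and count that $2k-2$ such vertices can be found, giving a forest of order $\binom{2k}{k} + 2k - 2$.

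The verification then has two parts: (1) the induced subgraph on the chosen vertex set is a forest, and (2) its order is $\binom{2k}{k}+2k-2 > \binom{2k}{k}+1$ when $k > 3$, so it is non-canonical (indeed it is not even of the form coclique-plus-one-vertex). Part (2) is immediate for $k \geq 3$ (giving $2k-2 \geq 4 > 1$), and the hypothesis $k > 3$ is presumably needed to make the explicit incidence choices in part (1) go through (there must be enough ``room'' among the $2k$ remaining points to select $2k-2$ compatible coclique vertices). For part (1), I would argue that adding each $A_i$ one at a time increases the number of vertices by one and the number of edges by at most one, and that whenever it adds exactly one edge it joins a new leaf to an existing tree, so no cycle is ever formed; this is where the bulk of the careful (but routine) bookkeeping lives.

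The main obstacle I anticipate is part (1): choosing the $2k-2$ vertices of $S_x$ explicitly and proving that none of them is adjacent to two or more previously-chosen vertices. Equivalently, I need $2k-2$ of the $(k+1)$-subsets $[2k+1]\setminus A_i$ (each avoiding $x$) such that no two matching edges of $M$ are simultaneously ``missed'' by the same $A_i$ in a way that creates a double attachment, and such that the $A_i$'s attachments are to distinct components or are leaves. I expect this to reduce to a small combinatorial design / greedy selection argument on $[2k]$, and the condition $k>3$ should emerge as exactly what is needed for the greedy process to complete.
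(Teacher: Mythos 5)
Your starting point---recognizing that the $\binom{2k}{k}$ $k$-subsets avoiding $x = 2k+1$ induce a perfect matching $F_1$, and that the goal is to attach $2k-2$ extra vertices from the star at $x$---is exactly the paper's. The gap is in your mechanism for keeping the union acyclic, and it is not just a missing detail: the condition you ask for is impossible. You correctly compute that every $A \in S_x$ has $k+1$ neighbours among the non-$x$-sets, yet a few lines later you require each added $A_i$ to be ``adjacent to at most one vertex of $M$'' and to ``see exactly one edge of $M$.'' Neither can happen. If $T_A := [2k]\setminus(A\setminus\{x\})$ (size $k+1$), the $k+1$ neighbours of $A$ in $F_1$ are $T_A\setminus\{t\}$ for $t\in T_A$, and the $[2k]$-complement of each one is $(A\setminus\{x\})\cup\{t\}$, which meets $A$ and hence is \emph{not} also a neighbour of $A$. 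So those $k+1$ neighbours lie in $k+1$ \emph{distinct} matching edges: every vertex you add attaches to at least $k+1\ge 3$ separate matching components, and the ``one new vertex adds at most one edge / is a leaf'' induction cannot start.

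What actually controls the cycles is a global compatibility condition among the added vertices, and the paper meets it with an explicit, non-greedy design. Take $\gamma_i = \{i, i+1, \dots, i+k-3\} \cup \{2k, 2k+1\}$ (indices read modulo $2k-1$) for $i=1,\dots,2k-2$. Because each $\gamma_i$ contains $2k$ as well as $2k+1$, it is automatically non-adjacent to whichever endpoint of a matching pair $\{B, [2k]\setminus B\}$ contains $2k$, so it hits at most one endpoint per matching edge. A short intersection count then shows that $\gamma_i$ and $\gamma_j$ have a common neighbour in $F_1$ iff $|x_i \cap x_j| \ge k-3$, which for cyclic intervals of length $k-2$ happens (with exactly one common neighbour) precisely when $j = i\pm 1$, and restricting to $i\le 2k-2$ prevents the cyclic wrap-around; this is exactly where $k>3$ is used. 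Hence the ``conflict graph'' on the $\gamma_i$'s is a path, so no cycle can close up and $F_1\cup F_2$ is a forest. That explicit choice of shifted intervals is the combinatorial ingredient your sketch leaves open, and the argument for acyclicity is a global one about shared components, not a vertex-by-vertex edge count.
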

\begin{proof}
Let $F_1$ be the set of all vertices in $K(2k+1,k)$ that do not contain the element $2k+1$; $F_1$ is a set of $\frac12\binom{2k}{k} = \binom{2k-1}{k}$ disjoint edges.

For $i = 1, \dots, 2k-2$, define $x_i = \{i, i+1,\dots, i+k-3\}$ with the entries taken modulo $2k-1$.
Define the set $F_2$ of vertices of the form $\gamma_i = x_i \cup \{2k,2k+1\}$ with $i =1,\dots , 2k-2$. Clearly $F_2$ is a coclique and any vertex in $F_2$ is adjacent to at most one vertex in any edge of $F_1$ (specifically, the vertex that does not contain $2k$). Further, vertices $\gamma_i$ and $\gamma_j$, have exactly one common neighbour in $F_1$ if $j=i+1$, and no common neighbours otherwise.

Thus $F_1 \cup F_2$ forms a forest of order $\binom{2k}{k} + 2k-2$.
\end{proof}

The eigenvalue bound from Theorem~\ref{algbound} in this case is
\[
\tau(K(2k+1,k)) 
 < \frac{\binom{2k+1}{k} \binom{k}{k-1}}{\binom{k+1}{k}+\binom{k}{k-1} } +\frac{2\binom{2k+1}{k}}{\binom{k+1}{k}+\binom{k}{k-1}} 
= \frac{k+2}{k} \binom{2k}{k-1}. 
\]

This bound is larger than the forest given in Lemma~\ref{lem:2k+1forest}. We can do better using
the bound produced by Theorem~\ref{elembound}, which is 
$$\frac{\binom{2k+1}{k}\binom{k+1}{k}-2}{2\binom{k+1}{k}-2}=\frac{k+1}{2k}\binom{2k+1}{k}-\frac{1}{k}$$
but this is still significantly larger than the forest our construction produces.

The final case to consider is $K(7,3)$, and in this case Theorem~\ref{elembound} tells us that an induced forest has order at most 
\[
\frac{4}{6}\binom{7}{3}-\frac{1}{3} = \frac{2(35)-1}{3} = 23
\] 
which can be achieved by the forest consisting of all triples from $\{1,\dots, 6\}$ along with $\{1,2,7\}$, $\{1,3,7\}$ and $\{2,3,7\}$. 

\section{Further Work}\label{further}

It would be interesting to have more examples of graphs $G$ with $\alpha(G)$ very close to $\tau(G)$. We suspect that a characterization of the graphs with $\tau(G) = \alpha(G) +1$ is unlikely, but perhaps we can find properties of a graph that would imply these two values are close. In a sense, any such graph would have large independent sets that are uniformly connected to the vertices in its complement. Specifically, any two adjacent vertices outside of the large independent set would have to be adjacent to at least one common vertex in the independent set, and non-adjacent vertices to at least two. This may lead to some structure conditions on a graph that imply that $\tau(G) = \alpha(G) +1$.
We also suspect that focusing the search on strongly-regular graph may produce more interesting examples. 

All the examples of graphs we considered in this paper are graphs whose maximum independent sets have been characterized. Maximum independent sets in Paley graph on a square number vertices were characterized by Blokhius \cite{blokhuis1984subsets}. We will now discuss some computational results we obtained regarding induced forests in these graphs. Let $q$ be a power of an odd prime. Let $\bb{F}_q$ and $\bb{F}_{q^{2}}$ be a fields of cardinality $q$. By $\mathcal{P}(q^{2})$, we denote the Paley graph on $q^2$ vertices.  The vertex set for $\mathcal{P}(q^{2})$ is $\bb{F}_q^{2}$, and two vertices are adjacent if and only if their difference is a quadratic residue in the $\bb{F}_{q^{2}}$. It is well-known that the Paley graph is self-complementary. In this regard, we could consider the complement $\mathcal{P'}(q^2)$ of the $\mathcal{P}(q^2)$. We do so because the maximum independent sets in the complement have the following natural characterization.

\begin{theorem}(Blokhius \cite{blokhuis1984subsets})\label{Paleyind}
Let $q$ be a power of a prime and $S$ be the set of non-zero squares in $\bb{F}_{q^{2}}$, then $\alpha(\mathcal{P'}(q^{2}))=q$ and the set $\{s\bb{F}_{q}+e\ : s\in S \text{and}\ e\in \bb{F}_{q^{2}}\}$ is the set of all independent sets of size $q$.
\end{theorem}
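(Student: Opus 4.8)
\textbf{Proof proposal for Theorem~\ref{Paleyind} (Blokhuis's characterization).}

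The plan is to exploit the rich algebraic structure of the Paley graph $\mathcal{P}(q^2)$ as a Cayley graph on the additive group of $\bb{F}_{q^2}$ with connection set $S$ (the nonzero squares), noting that $S = (\bb{F}_q^\times) \cdot S$ is a union of cosets of the multiplicative group $\bb{F}_q^\times$. First I would observe that, since $\bb{F}_q$ is a subfield of $\bb{F}_{q^2}$ and $q$ is odd, every nonzero element of $\bb{F}_q$ is a square in $\bb{F}_{q^2}$ (because $\bb{F}_q^\times$ has odd-or-even order $q-1$, and in any case $x \mapsto x^{(q^2-1)/(q-1)} = x^{q+1}$ maps $\bb{F}_{q^2}^\times$ onto $\bb{F}_q^\times$, so $\bb{F}_q^\times \subseteq S$). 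Hence the additive line $\bb{F}_q$ is a clique in $\mathcal{P}(q^2)$, equivalently a coclique in $\mathcal{P}'(q^2)$, and multiplying by any $s \in S$ and translating by any $e$ preserves the coclique property; this establishes that all the sets $s\bb{F}_q + e$ are cocliques of size $q$ in $\mathcal{P}'(q^2)$, giving $\alpha(\mathcal{P}'(q^2)) \geq q$. The bulk of the work is the reverse inequality together with the uniqueness statement.

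For the upper bound $\alpha(\mathcal{P}'(q^2)) \leq q$, the cleanest route is the Delsarte-Hoffman ratio bound (Theorem~\ref{ratiobound}): $\mathcal{P}'(q^2)$ is strongly regular with the same parameters as $\mathcal{P}(q^2)$ by self-complementarity, on $n = q^2$ vertices with valency $k = (q^2-1)/2$ and eigenvalues $(-1 \pm q)/2$; plugging the least eigenvalue $\lambda = (-1-q)/2$ into $n(-\lambda)/(k - \lambda)$ yields exactly $q$. So $\alpha = q$ is forced. The harder part is the characterization of \emph{all} cocliques of size $q$. Here I would use the standard consequence of tightness in the ratio bound: any coclique $C$ meeting the bound is \emph{regular} (equitable) with respect to the graph, meaning every vertex outside $C$ has the same number of neighbours in $C$; combined with the strongly-regular parameters this pins down the combinatorial structure tightly. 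The key algebraic input — this is Blokhuis's actual contribution and the main obstacle — is to show that a size-$q$ coclique, after translating so that $0 \in C$, must be a multiplicative coset $sF$ with $F$ a subgroup, forcing $F = \bb{F}_q$ (the unique subgroup of the right size that is also additively closed).

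I expect the decisive step to be a polynomial/character argument à la Blokhuis and Rédei: writing $C = \{c_1, \dots, c_q\}$ with $0 \in C$, the condition that no difference $c_i - c_j$ lies in $S$ means all nonzero differences lie in $N \cup \{0\}$ where $N$ is the set of nonzero \emph{non}-squares, i.e. $C$ is a Sidon-like configuration in a multiplicatively structured set. One then studies the polynomial $\prod_i (X - c_i)$ or the associated Rédei polynomial in two variables and uses that its behaviour is constrained because $N$ is the coset $gF^\times$ for a non-square $g$ (where $F^\times = \bb{F}_q^\times$); a degree argument, or an argument bounding the number of directions determined by the point set $\{(c_i, c_i^2)\}$ or similar, shows the only possibility is that $C$ is an affine image of $\bb{F}_q$. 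I would then translate back: a size-$q$ coclique is $C = s\bb{F}_q + e$ for some $s$; checking which scalars $s$ work shows $s$ must lie in $S$ (since $s\bb{F}_q$ is a coclique iff $s \cdot (\bb{F}_q^\times) \subseteq N$ fails and instead the line stays a clique in $\mathcal{P}$, which by the subfield computation above happens exactly when $s \in S$), and $e$ ranges over all of $\bb{F}_{q^2}$. Rather than reproducing Blokhuis's polynomial estimates in full, I would cite \cite{blokhuis1984subsets} for the precise direction-counting lemma and present the reduction to it, the ratio-bound upper bound, and the subfield verification in detail, since those are the parts that are short and self-contained.
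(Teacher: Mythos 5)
The theorem you were asked to prove is quoted by the paper directly from Blokhuis's 1984 article and used as a black box; the paper itself contains no proof of it, so there is no in-paper argument to compare against. Assessed on its own terms, the routine parts of your sketch are correct: $\bb{F}_q^\times\subseteq S$ because $(q-1)\mid (q^2-1)/2=(q-1)(q+1)/2$ (here one uses that $q$ is odd so $q+1$ is even; the parenthetical ``odd-or-even order $q-1$'' is a red herring), hence $\bb{F}_q$ and its $S$-dilates and translates are cliques in $\mathcal{P}(q^2)$, i.e.\ cocliques in $\mathcal{P}'(q^2)$; and the ratio bound with $n=q^2$, $k=(q^2-1)/2$, $\lambda=-(q+1)/2$ evaluates to exactly $q$, so $\alpha(\mathcal{P}'(q^2))=q$.

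There are, however, two local slips and one substantive gap. A coclique in $\mathcal{P}'(q^2)$ is a clique in $\mathcal{P}(q^2)$, so \emph{all} of its nonzero pairwise differences lie in $S$; you state this correctly at the start but then reverse it in the polynomial paragraph (``no difference lies in $S$ $\dots$ all differences lie in $N$''), which contradicts your own setup and is only equivalent after a nonsquare dilation. Relatedly, $N$ is not a coset of $\bb{F}_q^\times$ as you write: $|N|=(q^2-1)/2$ while $|g\bb{F}_q^\times|=q-1$; the correct statement is that $N=gS$ is the nontrivial coset of the index-two subgroup $S$. The real gap is the uniqueness assertion itself: that every $q$-subset of $\bb{F}_{q^2}$ with all differences square is an affine $\bb{F}_q$-line is the entire content of Blokhuis's theorem, and equitability from tightness of the ratio bound is nowhere near strong enough to force that affine structure on its own. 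You correctly flag that a genuine polynomial argument is needed and defer it to \cite{blokhuis1984subsets}. That deferral is defensible given that the paper also only cites the result, but it means your proposal verifies the easy inclusion and the counting bound while leaving the characterization — the part the paper actually relies on in Section~5 — unproved.
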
 

So the size of any canonical forest in $\mathcal{P'}(q^{2})$ is $q+1$. We will now use Theorem~\ref{algbound} to obtain an upper bound on the acyclic number. $\mathcal{P'}(q^{2})$ is strongly-regular graph and its spectrum is well known to be $(\frac{q^{2}-1}{2},\ \frac{q-1}{2},\ -\frac{q+1}{2})$ (see~\cite[Section 5.8]{GMbook}). Using Theorem~\ref{algbound}, we have 
\begin{align*}\label{pbound}
\tau(\mathcal{P}(q^{2}) < \dfrac{q^{2} (q^{2}+5)}{q^{2}+q} <  q+4
\end{align*}
 
In Example~\ref{exampleP9}, we concluded that $\tau(\mathcal{P'}(9))=5$. From the discussion above the size of a canonical forest in $\mathcal{P'}(9)$ is $4$ and thus in this case, maximum forests are not canonical. We will now consider two more Paley graphs of small order.

\begin{example}
Consider the graph $\mathcal{P'}(25)$, by Theorem~\ref{algbound}, a forest cannot have more than 8 vertices. We have $\bb{F}_{25} \cong \bb{F}_{5}[x]/\langle x^{2}+x+1\rangle$, and the set of quadratic residues is $S=\{\ov{0}\} \cup \{a,\ a\ov{x},\ a\left(\ov{x+1}\right)|\ a\in \bb{F}_{5}^{\ast} \}$. The induced subgraph $ \bb{F}_{5} \cup \{\ov{x+2}, \ \ov{x+4}\}$ is a forest (in fact, a tree) of order $7$ formed by adding two vertices to a maximum independent set. Since canonical forests have order $6$, this cannot be maximum forests. A computational search indicates that $7$ is the order of a maximum forest in this graph.
\end{example}

\begin{example}
Consider the complement of Paley graph on $49$ vertices. Again Theorem~\ref{algbound} implies a forest can have no more than $10$ vertices.  We have $\bb{F}_{49} \cong \bb{F}_{7}[x]/\langle x^{2}+1\rangle$. The set of quadratic residues is $S=\{\ov{0}\} \cup \{a,\ a\ov{x},\ a\left(\ov{x+1}\ \right),\ a\left(\ov{x-1}\ \right)|\ a\in \bb{F}_{7}^{\ast} \}$. The induced subgraph $ \bb{F}_{7} \cup \{\ov{x+2}, \ \ov{x+5}\}$ is a forest (in fact, a tree) of order $9$ formed by adding two vertices to a maximum independent set. Again computations indicate that $9$ is the order of a maximum forest in this graph, and canonical forests have order $8$.
\end{example}

In Example~\ref{exampleP9} and the above examples, maximum induced forests which are in fact trees were obtained by adding two vertices to a maximum independent set. Using Blokhius's characterization (Theorem~\ref{Paleyind}) of maximum independent sets, we used Sage~\cite{sage} to search if similar constructions were possible in bigger Paley graphs. We checked for all prime powers $7<q\leq 67$ that adding two vertices to a maximum independent set in $\mathcal{P'}(q^{2})$, will not result in a forest. So the examples we found may be anomalies occurring for small values of $q$. We make the following conjecture.

\begin{conj}\label{Conj:Paley}
For $q >7$ a prime power, $\tau(\mathcal{P}(q^2)) = q+1$.
\end{conj}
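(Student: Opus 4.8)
The plan is to prove Conjecture~\ref{Conj:Paley} by combining the spectral upper bound from Theorem~\ref{algbound} with a careful structural/counting analysis of non-canonical forests in $\mathcal{P'}(q^2)$, in the spirit of the arguments used for the other distance-regular graphs in Section~\ref{Can}. The conjecture asserts $\tau(\mathcal{P}(q^2)) = q+1$ for prime powers $q>7$, and since the Paley graph is self-complementary it suffices to show $\tau(\mathcal{P'}(q^2))=q+1$; the lower bound is immediate since a maximum independent set has size $q$ by Theorem~\ref{Paleyind} and adding any vertex produces a forest. So the content is the upper bound $\tau(\mathcal{P'}(q^2)) \le q+1$. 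By Theorem~\ref{algbound} we already know $\tau(\mathcal{P'}(q^2)) < q+4$, so only three potentially exceptional values $q+1, q+2, q+3$ remain; the task reduces to ruling out any induced forest $F$ with $|F| \in \{q+2, q+3\}$.

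The strategy I would use has two ingredients. First, since $\mathcal{P'}(q^2)$ is strongly-regular with parameters that can be read off from its spectrum $\left(\tfrac{q^2-1}{2},\ \tfrac{q-1}{2},\ -\tfrac{q+1}{2}\right)$, compute $\eta(\mathcal{P'}(q^2))$ exactly: for an adjacent pair $(a,b)$, the number of vertices non-adjacent to both is $q^2 - 2k + a + 2$ where $a = \tfrac14(q^2-5)$ is the number of common neighbours of adjacent vertices (obtained via the usual strongly-regular parameter identities). Plugging in shows $\eta(\mathcal{P'}(q^2))$ grows like $q^2/4$, so Lemma~\ref{lem:count} alone gives only $|F| \le 2 + 2\eta(\mathcal{P'}(q^2)) \approx q^2/2$, which is far too weak. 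So Lemma~\ref{lem:count} must be refined for this specific graph. Second — and this is the real work — I would analyze the structure of a non-canonical forest $F$ of order $q+2$ or $q+3$ directly. Such an $F$ contains an induced $P_4$ on vertices $\{a,b,c,d\}$ (with $a\sim b \sim c\sim d$) or an induced $P_2 + P_2$ on $\{a,b,c,d\}$. In either case every other vertex of $F$ lies in $N(a,b) \cup N(c,d)$ (or the analogous union), and every other vertex is non-adjacent to at least one endpoint-pair. Using the algebraic description $\mathcal{P'}(q^2)$ has vertex set $\mathbb{F}_{q^2}$ with adjacency by "difference is a non-square", one can translate "non-adjacent to both $a$ and $b$" into two simultaneous quadratic-residue conditions on the difference, and the number of solutions to such systems is controlled by character-sum estimates (Weil bounds). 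The key point is that the set $N(a,b)$, as a subgraph of $\mathcal{P'}(q^2)$, is itself highly structured — pseudo-random of density $\approx 1/4$ — so it cannot contain a large independent-ish set, let alone accommodate $q$ further forest vertices each satisfying extra non-adjacency constraints to $c,d$.

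The key steps, in order, are: (1) record the strongly-regular parameters and eigenvalue data for $\mathcal{P'}(q^2)$ and restate the $\tau < q+4$ bound from Theorem~\ref{algbound}; (2) reduce to showing no induced forest of order $q+2$ or $q+3$ exists; (3) given such a forest $F$, extract the induced $P_4$ or $P_2+P_2$ and observe that $F$ minus these four vertices, being a forest inside $N(a,b)\cap$ (further non-adjacency conditions), is a set of at least $q-2$ vertices; (4) bound the number of vertices simultaneously non-adjacent to three of $\{a,b,c,d\}$ — using the generalized-quadrangle-style rigidity fails here, so instead use that $\mathcal{P'}(q^2)$ contains no $K_3$-free structure forcing uniqueness, and instead apply character-sum/Weil estimates to bound $|\{v : v-a, v-c, v-d \text{ all non-squares}\}|$ and the analogous sets; (5) combine these to show the remaining forest has size $o(q)$, contradicting $q-2$. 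The hard part — and the place where this proof genuinely differs from the clean counting proofs in Section~\ref{Can} — is step (4)/(5): unlike the Kneser, $q$-Kneser, generalized-quadrangle, tensor-power, and orthogonal-array cases, the local structure $N(a,b)$ here is not small and not rigid, so one cannot bound $|F|$ by a constant; one must instead exploit pseudo-randomness to show that any forest inside $N(a,b)$ together with the extra constraints is small, and getting the Weil-bound error terms to beat $q$ for all $q>7$ (rather than just asymptotically) will require some care with small-$q$ casework, consistent with the fact that the conjecture genuinely fails for $q \le 7$.
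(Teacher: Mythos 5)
The statement you are trying to prove is labelled as a \emph{Conjecture} in the paper, and the authors explicitly state that they do not have a proof: they verified computationally for prime powers $7 < q \le 67$ that adding two vertices to a maximum coclique in $\mathcal{P'}(q^2)$ never yields a forest, and on that basis they conjecture $\tau(\mathcal{P}(q^2)) = q+1$. There is no proof in the paper to compare against, so the relevant question is whether your proposal actually closes the gap. It does not: it is a programme, not a proof, and you yourself flag the crucial steps (4) and (5) as ``the hard part'' without executing them.

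Concretely, the place where your sketch breaks down is exactly where the paper's counting method (Lemma~\ref{lem:count}) also fails, as you correctly observe. For $\mathcal{P'}(q^2)$ one computes from the strongly-regular parameters that $\eta = (q^2-1)/4$, so the Lemma~\ref{lem:count} bound is $|F| \le 2 + (q^2-1)/2$, which is hopeless. Your proposed repair is to exploit the pseudo-randomness of $N(a,b)$ via Weil-type character-sum estimates. But the obstruction is quantitative: after imposing non-adjacency to any three of the four vertices of the induced $P_4$ (or the two edge-pairs of a $P_2+P_2$), the candidate set for $F\setminus\{a,b,c,d\}$ is still of order $\Theta(q^2)$ --- each triple non-adjacency condition leaves roughly $q^2/8$ vertices, and $F\setminus\{a,b,c,d\}$ sits inside a union of several such sets. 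An induced forest inside a quasi-random graph on $\Theta(q^2)$ vertices can have order $\Theta(q)$ (an eigenvalue/ratio-bound argument for the induced subgraph gives only this), so one would need to control the implied constant very precisely to conclude $|F|<q+2$, and would also need to handle interactions between the several triple-non-adjacency sets. Nothing in your steps (3)--(5) does this, and step (3) even conflates the relevant set (a union $N(a,b)\cup N(c,d)$, of size $\approx q^2/2$) with a much smaller intersection. So as written the proposal leaves the conjecture exactly as open as the paper does; a genuine proof would require a new idea, not merely a sharper version of Lemma~\ref{lem:count}.
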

    
Paley graphs on $q$ vertices can be defined whenever $q$ is a prime power with $q\equiv 1 \pmod{4}$. Let $\mathcal{P'}(q)$ denote the graph on the field $\bb{F}_{q}$, in which two vertices are adjacent if and only if their difference is not a quadratic residue in $\bb{F}_{q}$. When $q$ is an even power, 
we conjectured above that $\tau(\mathcal{P'}(q)) = \sqrt{q}+1$. It is natural to ask the question of what happens when $p$ is not an even power of a prime. Applying Theorem~\ref{algbound}, we can show that $\tau(\mathcal{P'}(q))<\sqrt{q}+4$. 
 In this case, the order of the maximum independent sets is not known in general, but it is bounded by $\sqrt{q}$, and can be significantly smaller. For instance, when $q$ is a prime, \cite{hanson2021refined} shows that $\alpha(\mathcal{P'}(q))<\sqrt{\dfrac{q}{2}}+1$. From our computer searches it seems even in this case $\tau(\mathcal{P'}(q))$ is close to $\sqrt{q}$, so sometimes the induced forests are much larger than $\alpha(\mathcal{P'}(q))$. Further, $\tau(\mathcal{P'}(q))$ seems to be non-decreasing with $q$, which is not the case for the size of an independent set, and close to the eigenvalue bound. This may just be the case for small values of $q$, so more computational results would be helpful. A key missing result is a construction of an induced forest of size close to $\sqrt{q}$. Forests are bipartite graphs, and so existence of an induced forest of size $\sqrt{q}$ in $\mathcal{P'}(q)$ implies the existence of independent sets of size at least  $\sqrt{q}/2$. When $q$ is not an ever power of a prime, there are no known constructions of such large independent sets in $\mathcal{P'}(q)$. 

\bibliographystyle{plain}
\bibliography{ref}
\end{document}